\documentclass{article}
\usepackage[english]{babel}
\usepackage[utf8]{inputenc}
\usepackage[T1]{fontenc}
\usepackage{graphicx}
\usepackage{geometry}
\usepackage{sectsty}
\geometry{a4paper}
\usepackage{amsthm,amsfonts}
\usepackage{amsmath}
\usepackage{mathabx}
\usepackage{accents}
\usepackage{titlesec}
\usepackage{mathtools}
\mathtoolsset{showonlyrefs=true}

\titleformat*{\section}{\Large\bfseries}
\titleformat*{\subsection}{\large\bfseries}
\titleformat*{\subsubsection}{\large\bfseries}
\titleformat*{\paragraph}{\large\bfseries}
\titleformat*{\subparagraph}{\large\bfseries}

\newtheorem{teo}{Theorem}
\newtheorem{lema}[teo]{Lemma}

\newtheoremstyle{mytheoremstyle} 
{\topsep}                    
{\topsep}                    
{}                   
{}                           
{\scshape}                   
{.}                          
{.5em}                       
{}  

\theoremstyle{mytheoremstyle} \newtheorem{nota}{Remark}
\theoremstyle{mytheoremstyle} 
\numberwithin{equation}{section}
\newcommand{\real}{\mathbb{R}}
\newcommand{\complex}{\mathbb{C}}

\newcommand{\by}{\textbf{y}}

\newcommand \ben {\begin{equation}}
\newcommand \een {\end{equation}}
\newcommand \be {\begin{equation*}}
\newcommand \ee {\end{equation*}}
\newcommand \bi {\begin{itemize}}
\newcommand \ei {\end{itemize}}

\newenvironment{sproof}{%
  \proof}{\endproof}

\DeclarePairedDelimiter{\floor}{\lfloor}{\rfloor}
\DeclareMathOperator*{\partere}{Re}
\DeclareMathOperator*{\parteim}{Im}

\pagestyle{plain}
\title{\textbf{Some $L^\infty$ solutions of the hyperbolic nonlinear Schrödinger equation and their stability}}
\author{Simão Correia and Mário Figueira}
\begin{document}
\maketitle
\begin{abstract}
Consider the hyperbolic nonlinear Schrödinger equation (HNLS) over $\real^d$
$$
iu_t + u_{xx} - \Delta_{\textbf{y}} u + \lambda |u|^\sigma u=0.
$$
We deduce the conservation laws associated with (HNLS) and observe the lack of information given by the conserved quantities. We build several classes of particular solutions, including \textit{spatial plane waves} and \textit{spatial standing waves}, which never lie in $H^1$. Motivated by this, we build suitable functional spaces that include both $H^1$ solutions and these particular classes, and prove local well-posedness on these spaces. Moreover, we prove a stability result for both spatial plane waves and spatial standing waves with respect to small $H^1$ perturbations.
\vskip10pt
\noindent\textbf{Keywords}: Hyperbolic nonlinear Schrödinger equation; explicit solutions; local well-posedness; stability.
\vskip10pt
\noindent\textbf{AMS Subject Classification 2010}: 	35Q55, 35B35, 35B06, 35E99.
\end{abstract}

\section{Introduction}

In this work, we shall consider the hyperbolic nonlinear Schrödinger equation
\begin{equation}\tag{HNLS}\label{HNLS}
iu_t + u_{xx} - \Delta_{\textbf{y}} u + \lambda |u|^\sigma u=0, 
\end{equation}
where 
$$\lambda\in\real, \quad 0<\sigma<4/(d-2)^+,\quad
u=u(t,x,\textbf{y}), (x,\textbf{y})\in \real^d,\ d\ge 2
$$
(we use the convention: $4/(d-2)^+=\infty$, if $d=2$; $4/(d-2)^+=4/(d-2)$, $d\ge 3$).
This is a special case of a larger class of dispersive equations, namely
\begin{equation}
iu_t - Lu + \lambda |u|^\sigma u=0,
\end{equation}
with $L$ a second-order differential operator on the spatial variables. In fact, \eqref{HNLS} corresponds to the case where the principal part of $L$ has a unique negative direction. It is important to notice that, when all directions are positive, $L$ is elliptic, which includes the well-known nonlinear Schrödinger equation
\begin{equation}\tag{NLS}\label{NLS}
iu_t - \Delta u + \lambda |u|^\sigma u=0.
\end{equation}

Physically, (HNLS) is related with deep water waves and plasma physics (for $d=2$) and nonlinear optics (for $d=3$); see \cite{sulem}, \cite{conti}. In the latter case, $t$ is to be interpreted as the propagation direction and $x$ is time. Due to its practical relevance, this equation has been analyzed numerically in several papers (see \cite{fedele}, \cite{sulem}). On the other hand, in a theoretical point of view, one has works on both the linear equation and on some special solutions to (HNLS) (see \cite{saut1}, \cite{saut2}, \cite{nahmod}, \cite{nanlu}, \cite{godet}). However, a qualitative theory is still to be discovered. In fact, even though there is a very complete set of techniques for \eqref{NLS}, the presence of a negative direction makes most of them unusable, as we shall see later on.

Here, we make some theoretical groundwork on the behaviour of solutions of the initial value problem associated with (HNLS),
\begin{equation}\label{pvi}
\left\{\begin{array}{ll}
iu_t + \qedsymbol u + \lambda |u|^\sigma u=0,& \qedsymbol:=\partial_{xx}^2 - \Delta_\by \\
u(0,x,\by)=u_0(x,\by) &.
\end{array}\right.
\end{equation}

There is one thing that the reader should keep in mind: most results and techniques presented here are also applicable for the (NLS). In view of this, there are two points in our work that we want to highlight: first, it is important to see which techniques are also available for the (HNLS), so that one has a good starting point for future research; second, the difficulty that arised from (HNLS) impeled us to find properties that have not been investigated in the (NLS) context, and so our results point out new strategies and ways of thinking about these equations. 

Now we present briefly the contents of this paper. In the first section, we study the invariances of (HNLS) and deduce the corresponding conservation laws. Contrary to the (NLS) setting, these conservation laws do not give a clear picture of the dynamics of the equation. We make use of this section to present a complete deduction of a "generalized pseudo-conformal" transformation in the case $\sigma=4/d$. This transformation turns out to relate in a very clear way the usual pseudo-conformal transform and the lens transform (\cite{carles}, \cite{tao}).

In section 3, we present a large family of $L^\infty$ (semiclassical) solutions, which present both global and blow-up behaviours (in the $L^\infty$ norm). To do this, one uses the generalized pseudo-conformal transformation applied to solutions of a special nonlinear wave equation.

In section 4, we turn our study to \textit{spatial plane wave} solutions, that is, solutions of the form $u(t,x,\by)=f(t,x-c\cdot\by)$. For these solutions, one may also observe global existence and blow-up. Furthermore, we study local well-posedness on spaces that include both spatial plane waves and $H^1$ solutions and study the stability of these plane waves regarding small $H^1$ perturbations. To our knowledge, these results are a novelty even in the (NLS) context.

In section 5, we make some considerations on hyperbolically radial solutions. These solutions will be defined on special regions in $\real^d$, over which one may observe several qualitative properties. The key ingredient is a reduction to the radial (NLS) equation, which allows to transfer results to this setting. The extension of such solutions to the whole space is a very difficult question (see remark \ref{naocola} and \cite{nahmod}).

Finally, in section 6, we consider \textit{spatial standing waves}, that is, solutions of the form $u(t,x,\by)=e^{i\omega x}\phi(t,\by)$. As in the spatial plane wave case, we study local well-posedness on spaces that include both spatial plane waves and $H^1$ solutions and study the $H^1$-stability of these spatial standing waves.

\begin{nota}
A motivation to consider spatial plane waves and spatial standing waves is the fact that, for some models in nonlinear optics (see \cite{sulem}), the negative direction $x$ actually represents \textit{time} and so these solutions are such that their \textit{time} evolution (in the physical model) is of a specific form. Actually, in this context, the term "spatial" can be a bit misleading, but we introduce it so that there is no confusion with the standard mathematical notion of plane waves and standing waves. These solutions do not present finite energy in the mathematical sense (see section 2); however, they do have finite energy on the plane $Oyz$, transverse to the axis of propagation, for each fixed physical time $x$. Note that such a question is not posed on the (NLS) model, since there is no dependence on the physical time. One could even argue that, since the energy of solutions of (NLS) has no dependence in time, the integration of the energy in the time variable is infinite. Therefore, in the context of nonlinear optics, the finite-energy assumption in both $x$ and $\by$ directions may be too restrictive and should be replaced by a finite-energy condition on the transverse plane. This is supported by the difficulty in finding examples of finite-energy solutions. Note that these solutions do exist; however, an explicit example is yet to be found.
\end{nota}

\section{Invariances and conservation laws}

One of the main tools in the development of a qualitative theory for a given equation is to figure out certain spatial quantities whose evolution in time has a very explicit form (in most cases, it turns out that they are, in fact, constant in time). As in (NLS), (HNLS) has a Lagrangian formulation, which enables one to obtain conservation laws from invariances of the equation (which are, usually, more obvious to determine).

Since (HNLS) and (NLS) are quite similar in an algebraic way, one might expect that invariances of (NLS) have a very close-related counterpart in the (HNLS) setting. In fact, one has the following invariances of the (HNLS):

\begin{enumerate}
\item Space-time translations: $v(x,\by,t)=u(x+x_0,\by+\by_0,t+t_0)$;
\item Gauge invariance; $v(x,\by,t)=e^{i\theta}u(x,\by,t)$;

\item Galilean invariance: $v(x,\by,t)=e^{\frac{i}{2}(ax-\mathbf{b}\cdot\by) - \frac{i}{4}(a^2-|\mathbf{b}|^2)t}u(x-at,\by-\mathbf{b}t,t)$;
\item Dilation invariance; $v(x,\by,t)=\lambda^{\frac{2}{\sigma}} u(\lambda x, \lambda \by, \lambda^2 t)$;

\item Hyperbolic invariance ($d=2$): $v(t,x,y)=u(t,x\cosh\alpha + y\sinh\alpha, x\sinh\alpha + y\cosh\alpha)$;
\end{enumerate}

\begin{nota}
Since one still has the dilation invariance, one may define (as in the (NLS) context) the notion of criticallity: given a Banach space $E$, one says that $\sigma$ is $E$-critical if the dilation invariance leaves the norm of $E$ invariant.
\end{nota}

A standard application of Noether's theorem implies the following conservation laws, which may be rigorously justified using the same process as in (NLS):

\begin{enumerate}
\item Conservation of energy and linear momentum:
$$
\frac{dE(u(t))}{dt}:=\frac{d}{dt}\left(\int \frac{|u_x(t)|^2}{2} - \frac{|\nabla_\by u(t)|^2}{2} - \frac{\lambda}{\sigma+2}|u(t)|^{\sigma+2}\right)=0,\quad \frac{d}{dt}\parteim\int \bar{u}(t)\nabla u(t) = 0;
$$
\item Conservation of mass:
$$
\frac{d}{dt}\int |u(t)|^2 = 0;
$$
\item Center of mass evolution law:
$$
\frac{d}{dt}\int x|u(t)|^2 = \parteim \int \bar{u}(t)u_x(t),\  \frac{d}{dt}\int y|u(t)|^2 = - \parteim \int \bar{u}(t)u_y(t)
$$
(notice that the conservation of linear momentum implies that the center of mass evolves linearly);
\item Virial identity (part I):
$$
\frac{d}{dt}\parteim \int \bar{u}(t)(xu_x(t) - \by\cdot \nabla_{\by} u(t)) = 4E(u_0) + \lambda\left(\frac{2d+4}{\sigma+2}-d\right)\int |u(t)|^{\sigma + 2}
$$
\end{enumerate}

\begin{nota}\label{energianaodefinida}
Note that the energy gives no direct information on the $L^2$-norm of the gradient. As a consequence, several techniques that are available for the (NLS) are unusable here. For example, the standard global existence result in the $L^2$-subcritical case relying on Gagliardo-Nirenberg's inequality is not applicable.
\end{nota}

\subsection{Deduction of the generalized pseudo-conformal invariance}\label{pseudo}

In the special case $\sigma=4/d$, one observes another family of invariances. These invariances will be of importance later on (see sections 3.3 and 4). Here, we present a complete deduction of such a family. This may also shed some light on the corresponding group of invariances for the (NLS).

Given smooth functions $u:\real\times \real^d\to \complex$ and $g,a,b,f:[0,T)\to\real$, define
\begin{equation}\label{lente}
v(t,x,\by)=u\left(g(t), \frac{x}{b(t)}, \frac{\by}{b(t)}\right)\exp\left(\frac{ia(t)(x^2-|\by|^2)}{4}\right)f(t)
\end{equation}

A tedious but straighfoward calculation shows that
\begin{align*}
& iv_t + \qedsymbol v + \lambda |v|^{4/d}v\\=& \Big( i\frac{f}{f'} u - i\frac{b'}{b^2}((x,\by)\cdot\nabla u) + iu_sg' - a'\frac{(x^2-|\by|^2)}{4}u + b^{-2}\qedsymbol u + iab^{-1}((x,\by)\cdot \nabla u)\\+&\frac{iad}{2}u - u\frac{a^2}{4}(x^2-|\by|^2) + \lambda|f|^{4/d}|u|^{4/d}u\Big)\times \exp\left(\frac{ia(x^2-|\by|^2)}{4}\right)f
\end{align*}

If we consider the restrictions
$$
(i)\ \frac{f'}{f}=-a\frac{d}{2},\quad (ii)\ b'=ab,\quad (iii)\ f(0)=b(0)=1
$$
(which imply that $f=b^{-d/2}$), we obtain
\begin{align*}
iv_t + \qedsymbol v + \lambda |v|^{4/d}v= \Big( &iu_sg' - \frac{a'+a^2}{4}(x^2-|\by|^2)u + b^{-2}\qedsymbol u \\&+ \lambda b^{-2}|u|^{4/d}u\Big)\times \exp\left(\frac{ia(x^2-|\by|^2)}{4}\right)f
\end{align*}

Finally, if we demand that, for some $k\in\real$,
$$
(iv)\ g'=b^{-2}, \quad (v)\ a'+a^2=4kb^{-4},
$$
one arrives to
\begin{align*}
& iv_t + \qedsymbol v + \lambda |v|^{4/d}v= \Big( iu_s + \qedsymbol u + \lambda|u|^{4/d}u- k(x^2-|\by|^2)u \Big)\times b^{-2}\exp\left(\frac{ia(x^2-|\by|^2)}{4}\right)f.
\end{align*}

Therefore $v$ is a solution of (HNLS) if and only if $u$ is a solution of the (HNLS) with an "harmonic" potential
$$
iu_s + \qedsymbol u + \lambda|u|^{4/d}u- k(x^2-|\by|^2)u = 0.
$$

Notice that, from $(ii)$ and $(v)$,
$$
a''+2a'a=-4kb^{-4}\left(4\frac{b'}{b}\right)= -(a'+a^2)4a.
$$
Hence
\begin{equation}\label{equacaoa}
a'' + 6aa' + 4a^3 =0
\end{equation}
with an initial condition $a(0)=a_0$ and, since $b(0)=1$, $a'(0)=4k-a_0^2$. It follows from $(i), (ii), (iii)$ and $(iv)$ that
$$
b(t)=\exp\left(\int_0^t a(\tau)d\tau\right),\ f(t)=\exp\left(-\frac{d}{2}\int_0^ta(\tau)d\tau\right),
$$
\begin{equation}\label{bfg}
g(t)=g(0)+\int_0^t\exp\left(-2\int_0^\rho a(\tau)d\tau\right)d\rho.
\end{equation}

Notice that the presence of $g(0)$ induces only a translation in time, which is an invariance we already covered, and so we shall consider $$(vi)\ g(0)=0.$$ In conclusion, the transform defined by \eqref{lente} with hypothesis $(i)-(vi)$ is a 2-parameter transform, the \textit{generalized pseudo-conformal transform}, which we denote by $v=\mathcal{T}_{a_0,k}u$. For example, choosing $a_0=0$ and $k\in\real^+$, we obtain from \eqref{equacaoa}
$$
a(t)=\frac{4kt}{4kt^2+1},\ b(t)=(4kt^2+1)^{1/2},\ g(t)=\frac{1}{\sqrt{4k}}\tan^{-1}(\sqrt{4k}t),\ f(t)=\frac{1}{(4kt^2+1)^{d/4}}.
$$
In the special case $k=1/4$, the transformation \eqref{lente} reads
$$
v(t,x)=\left(\mathcal{T}_{0,1/4}u\right)(t,x,\by)=\frac{1}{(1+t^2)^{d/4}}u\left(\tan^{-1}(t), \frac{x}{\sqrt{1+t^2}}, \frac{\by}{\sqrt{1+t^2}}\right)e^{i\frac{(x^2-|\by|^2)t}{4(1+t^2)}}
$$
which is the version for (HNLS) of the inverse of the lens transform (see \cite{carles}, \cite{tao}).

On the other hand, if $k=0$ and $a_0\in\real$, we get $a'+a^2=0$ and so
$$
a(t)=\frac{a_0}{1+a_0t},\ b(t)=1+a_0t,\ g(t)=\frac{t}{1+a_0t},\ f(t)=(1+a_0t)^{-d/2}.
$$
This means that the transformation $v=\mathcal{T}_{a_0,0}u$ is precisely the version for (HNLS) of the usual pseudo-conformal transform. This transform has a major relevance in the qualitative theory for (NLS), one of the reasons being that it gives rise to the Virial identity. In the context of (HNLS), the pseudo-conformal transform induces the conservation law
$$
\frac{d}{dt}\int (|x|^2-|\by|^2)|u(t)|^2=4\parteim \int \bar{u}(t)(xu_x - \by\cdot\nabla_{\by}u(t))
$$
which implies the Virial identity (or "variance" identity)
$$
\frac{d^2}{dt^2}V(u(t)):=\frac{d^2}{dt^2}\int (|x|^2-|\by|^2)|u(t)|^2=16E(u_0) + 4\lambda\left(\frac{2d+4}{\sigma+2}-d\right)\int |u(t)|^{\sigma + 2}.
$$

\begin{nota}
Similarly to remark \ref{energianaodefinida}, note that $V$ is not a definite functional, which overthrows, for example, the variance blow-up argument for the supercritical focusing equation. It becomes clear that there is a certain "competition" between the $x$-direction and the $\by$-direction (see \cite{sulem} for a discussion on this topic). However, it is not clear at all how does this competition evolve in time: such a problem remains open.
\end{nota}

\begin{nota}
As in the $L^2$-critical (NLS), the pseudo-conformal transform may be used to obtain global solutions of (HNLS) for small $H^2$ initial data for large $\sigma$. Specifically, if one considers $v=\mathcal{T}_{0,-1}u$, a straightfoward calculation shows that $u$ is a solution of \eqref{pvi} on $[0,T)$ if and only if $v$ is a solution of the corresponding nonautonomous equation
\begin{equation}\label{eqpseudo}
iv_t + v_{xx} - \Delta_\by v + \lambda (1-t)^{\frac{d\sigma-4}{2}}|v|^\sigma v=0
\end{equation}
on the interval $[0,\frac{T}{1+T})$. Therefore, the global existence for $u$ is equivalent to the existence of $v$ on $[0,1)$, which may be achieved using Strichartz estimates (see, for example, \cite{cazenaveweissler}).
\end{nota}

\section{A family of semiclassical solutions}

Here, we shall apply the transformation $\mathcal{T}_{a_0,k}$ (cf. section \ref{pseudo}) to obtain a family of solutions of (HNLS) in the critical case $\sigma = 4/d$. The idea will be to use bound-states of (HNLS) (which are never in $H^1$, as proved in \cite{saut2}). We say that a $u$ is a semiclassical solution of (HNLS) if $u\in C([0,T),L^\infty(\real^d))$ and $u$ satisfies (HNLS) in the distributional sense.
The following theorem displays a family of semiclassical solutions as well as their dynamics.

\begin{teo}
Let $A_0\in L^\infty(\real^d)$ be a solution of
\begin{equation}\label{boundstate}
\qedsymbol u + \lambda |u|^{4/d}u = (k(x^2-|\by|^2) + \gamma_0)u,\ k,\gamma_0\in\real.
\end{equation}
Then the (HNLS) admits the following family of semiclassical solutions:
\begin{align}\label{semiclassica1}
\psi(t,x,\by)=&\exp\left(-\frac{d}{2}\int_0^ta(\tau)d\tau\right)A_0\left((x,\by)\exp\left(-\int_0^t a(\tau)d\tau\right)\right)\nonumber\\\times &\exp\left(ia(t)\frac{x^2-|\by|^2}{4}\right)\exp\left(i\gamma_0\int_0^t\exp\left(-2\int_0^s a(\tau) d\tau\right)ds\right)
\end{align}
with $a'(t)+a(t)^2=4k\exp\left(-4\int_0^t a(\tau) d\tau\right)$, $a(0)=a_0\in\real$. Futhermore, if $A_0(0)\neq 0$ and
\begin{enumerate}
\item if $k<0$, the solution blows up in finite time in the $L^\infty$ norm, for any initial data $a_0$;
\item if $k=0$, the solution blows up at $T=-1/a_0$;
\item if $k>0$, the solution is global in time and its $L^\infty$ norm decays like $O(1/t)$.
\end{enumerate}
\end{teo}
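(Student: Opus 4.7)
The plan is to exhibit $\psi$ as the image, under the generalized pseudo-conformal transform $\mathcal{T}_{a_0,k}$ of Section \ref{pseudo}, of a purely time-harmonic solution of (HNLS) with ``harmonic'' potential. Concretely, set $u(s,x,\by) := A_0(x,\by)\,e^{i\gamma_0 s}$; then \eqref{boundstate} says exactly that $u$ solves
$$
iu_s + \qedsymbol u + \lambda|u|^{4/d}u - k(x^2-|\by|^2)u = 0.
$$
The construction in Section \ref{pseudo} tells us that $\psi := \mathcal{T}_{a_0,k} u$ solves \eqref{HNLS}, and plugging $u$ into \eqref{lente} with $f=b^{-d/2}$ and $b,g$ from \eqref{bfg} recovers verbatim the formula \eqref{semiclassica1}.

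For the qualitative behaviour, the essential observation is that
$$
|\psi(t,x,\by)| = b(t)^{-d/2}\,|A_0((x,\by)/b(t))|,
$$
so $\|\psi(t)\|_\infty = b(t)^{-d/2}\|A_0\|_\infty$ and, by the hypothesis $A_0(0)\neq 0$, also $|\psi(t,0,0)| = b(t)^{-d/2}|A_0(0)|$. Hence the three claims reduce, respectively, to $b(t)\to 0$ in finite time ($k<0$), $b$ vanishing at $T=-1/a_0$ ($k=0$), and $b(t)\to\infty$ like $|t|$ ($k>0$). From $b'=ab$ and the constraint $a'+a^2=4kb^{-4}$ one computes $b''=(a'+a^2)b=4kb^{-3}$; multiplying by $b'$ and integrating with the initial data $b(0)=1$, $b'(0)=a_0$ yields the first integral
$$
(b')^2 = a_0^2 + 4k - \frac{4k}{b^2}.
$$

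From here a standard phase-plane analysis handles the three cases. For $k=0$, $b(t)=1+a_0 t$ is affine and (2) is immediate. For $k>0$, the potential $4k/b^2$ is repulsive at the origin and the conserved energy $a_0^2+4k>0$ is strictly positive, so $b$ stays bounded away from $0$ and $|b'(t)|\to\sqrt{a_0^2+4k}>0$ as $|t|\to\infty$; hence $b(t)\sim|t|\sqrt{a_0^2+4k}$ and $\|\psi(t)\|_\infty = O(|t|^{-d/2})=O(1/t)$ since $d\ge 2$. For $k<0$ the potential is attractive at $b=0$, and the finiteness of the blow-up time reduces to the convergence of
$$
\int_0^{1}\frac{db}{\sqrt{\,a_0^2 + 4k - 4k/b^2\,}},
$$
whose integrand behaves like $b/(2\sqrt{|k|})$ as $b\to 0^+$.

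The principal obstacle lies in this last case: one must split according to the sign of $a_0$ and to whether $a_0^2 < 4|k|$ or $a_0^2\ge 4|k|$ to identify in which direction of time (forward or backward, but always finite) the orbit reaches $b=0$. The remaining cases and the reduction to the scalar ODE are essentially mechanical once $\mathcal{T}_{a_0,k}$ is in hand.
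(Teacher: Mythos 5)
Your construction step coincides with the paper's: the authors also take $\phi(t,x,\by)=e^{i\gamma_0 t}A_0(x,\by)$ as a solution of the harmonic-potential equation and define $\psi=\mathcal{T}_{a_0,k}\phi$, so that part is essentially identical. For the dynamics you diverge: the paper integrates the $a$-equation $a''+6aa'+4a^3=0$ explicitly, writing $a(t)=\frac{t+c_2}{(t+c_2)^2+c_1}$ and reading the trichotomy off the sign of $c_1$, whereas you work with $b$ via the Newtonian equation $b''=4kb^{-3}$ and its energy integral $(b')^2=a_0^2+4k-4k/b^2$, then do a phase-plane analysis. Both routes are correct and reach the same conclusions, but yours buys a few things: (i) it makes transparent that the relevant quantity is $b=\exp(\int_0^t a)$ rather than $a$ itself, and gives $\|\psi(t)\|_\infty=b(t)^{-d/2}\|A_0\|_\infty$ exactly, hence the sharper decay $O(t^{-d/2})$ for $k>0$ (the paper's $O(1/t)$ follows since $d\ge 2$, but its derivation from ``$a$ decays like $1/t$'' is indirect); (ii) your first integral is exact, while the paper's identification $c_1=a'(0)+a_0^2=4k$ is actually off by the normalization factor $(c_2^2+c_1)^2$ --- only the sign of $c_1$ agrees with the sign of $k$, which is fortunately all that is needed there; (iii) you explicitly flag that for $k<0$ (and for $k=0$ with $a_0>0$) the collision $b\to 0$ may occur only in \emph{backward} time when $a_0>0$ and $a_0^2\ge 4|k|$, so ``blows up in finite time'' must be read as ``the maximal existence interval is not all of $\real$''; the paper's proof silently picks one root $t_0=\sqrt{4|k|}-c_2$ without checking its sign. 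The convergence of $\int_0^1 db/\sqrt{a_0^2+4k-4k/b^2}$ near $b=0$ is correctly verified, so the case analysis you defer is indeed mechanical.
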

\begin{proof}
First of all, notice that $\phi(t,x,\by)=e^{i\gamma_0 t}A_0(x,\by)$ is a solution of $$
iu_t + \qedsymbol u + \lambda|u|^{4/d}u- k(x^2-|\by|^2)u = 0.
$$
Hence, for any given $a_0\in\real$, 
$$\psi(t,x,\by)=(\mathcal{T}_{a_0,k}\phi)(t,x,\by)=e^{i\gamma_0 g(t)}\phi\left( \frac{x}{b(t)}, \frac{\by}{b(t)}\right)\exp\left(\frac{ia(t)(x^2-|\by|^2)}{4}\right)f(t)$$
is a solution of (HNLS). Using \eqref{bfg}, one arrives to the expression \eqref{semiclassica1}.

To study the long-time behaviour of these solutions, we first observe that, since $a''+6a'a+4a^3=0$,
$$
a(t)=\frac{t+c_2}{(t+c_2)^2 + c_1}, \mbox{ for some } c_1,c_2\in\real.
$$
It follows from a simple computation that $c_1=a'(0) + a(0)^2=4k$. Now one splits in two possibilities:
\begin{enumerate}
\item if $k\le 0$, then $a$ blows up at $t_0=\sqrt{4|k|}-c_2$ as $1/t^2$ for $k<0$ and as $1/t$ for $k=0$. Since $A_0(0)\neq 0$, it follows from \eqref{semiclassica1} that $\psi$ also blows up in the $L^\infty$ norm at time $t_0$;
\item if $k>0$, then $a$ is global in time and decays as $O(1/t)$, which implies in turn that $\psi$ is global in time and presents the same decay in $L^\infty$.
\end{enumerate}
\end{proof}
\begin{nota}
A different technique using an hydrodynamical approach was used by O. Rozanova (\cite{rozanova}) to obtain a similar class of solutions for the (NLS). In fact, we observe that such a technique results in the generalized pseudo-conformal transform $\mathcal{T}_{a_0,k}$, which endows this transformation with a concrete physical interpretation.
\end{nota}

\begin{nota}
In the particular case $k=0$, $\gamma_0<0$, $\lambda\ge 0$ and $d=2$, we obtain easily an $L^\infty$ solution of \eqref{boundstate}. This is an elementary consequence of the energy integral for the Klein-Gordon equation $u_{xx}-u_{yy} + \lambda |u|^2u = \gamma_0 u$,
\begin{equation}
E(u(x))=\int |u_x(x)|^2 dy + \int |u_y(x)|^2 dy + \lambda \int |u(x)|^4 dy - \gamma_0\int |u(x)|^2 dy=E(u(0)),
\end{equation}
which implies
$$
\|u\|_{L^\infty(\real^2)}\lesssim \sup_x \|u(x,\cdot)\|_{H^1(\real)}\le E(0).
$$
\end{nota}

\section{Spatial plane waves}

In this section, we shall consider spatial plane waves, that is, solutions of the form $u(t,x,\by)=f(t,x-c\cdot\by)$, where $c\in\real^{d-1}$, $c\neq 0$, is a fixed vector. For $u$ to be a solution to the (HNLS), one must have
\begin{equation}\label{plane}
if_t + (1-|c|^2)f_{zz} + \lambda|f|^\sigma f=0, \ f(0,z)=f_0(z)
\end{equation}

One of the interesting properties is that the size of $|c|$ determines the nature of the equation: fix, for example, $\lambda=1$. Then,
\begin{enumerate}
\item if $|c|<1$, \eqref{plane} is the focusing (NLS), which may exhibit blow-up phenomena;
\item if $|c|=1$, one may solve expliclty \eqref{plane}:
$$
f(t,z)=f(0,z)e^{i|f(0,z)|^\sigma t}, \ t\in\real
$$
We observe that such solutions verify $|f(t,z)|=|f(0,z)|, \forall t, z$, which means that these solutions are \textit{localized} (that is, their shape is not distorted by the flow of the equation);
\item  if $|c|>1$, \eqref{plane} is the defocusing (NLS), for which no blow-up solutions exist.
\end{enumerate}

\begin{nota}
Spatial plane waves also exist for the (NLS); however, the speed of the wave has no influence in the global dynamics. Furthermore, the existence of solutions with constant amplitude is a unique property of (HNLS).
\end{nota}

\begin{nota}\label{nplana}
If $d\ge 3$, fix $1\le n<d$. We write $\mathbf{x}=(x,\by_1,\by_2)\in \real\times\real^{d-n}\times \real^{n-1}$. One may also consider solutions of the form $u(t,\mathbf{x})=f(t,x-c\cdot \by_1,\by_2)$, $c\in\real^{d-n}$, which we shall call $n$-dimensional spatial plane waves. In this situation,
$$
if_t + (1-|c|^2)f_{zz} - \Delta_{\by_2} f + \lambda |f|^\sigma f=0.
$$
If $|c|<1$, then $f$ satisfies a (HNLS)-type equation; if $|c|\ge 1$, then one arrives to the (NLS) equation, where one may observe global existence and finite-time blow-up, depending on the sign of $\lambda$.
\end{nota}

\subsection{Local existence}\label{seccaoexistlocal}

We now develop a suitable functional framework that includes both solutions in $H^1(\real^d)$ and spatial plane waves. 
Given $c\in\real^{d-1}\setminus \{0\}$, define the space of spatial plane waves
\begin{equation}
X_c=\left\{u\in L^1_{loc}(\real^d): \exists f\in H^2(\real): u(x,\by)=f(x-c\cdot\by)\ a.e. \right\}
\end{equation}
and endow it with the norm $\|u\|_{X_c}=\|f\|_{H^2}$ (we say that $f$ is the profile of $u$). If $u_0\in X_c$, then the solution of
\begin{equation}\label{livre}
iu_t + \qedsymbol u = 0, u(0)=u_0
\end{equation}
is given by $u(t,x,\by)=(U(t)f_0)(x-c\cdot \by)$, where $f_0$ is the profile of $u_0$, and $U(t)$ is the Schrödinger group $e^{i(1-|c|^2)t\Delta}$ acting on $H^2(\real)$ for $|c|\neq 1$, and the identity map for $|c|=1$. It is clear that $u(t)\in X_c, \forall t$, and that
$$
\|u(t)\|_{X_c}=\|U(t)f_0\|_{H^2}=\|f_0\|_{H^2}=\|u_0\|_{X_c},
$$
which means that $U(t)$ induces naturally the group of isometries $e^{it\qedsymbol }$ on $X_c$. Set
$$
E=H^1(\real^d)\oplus X_c.
$$ 
Obviously, the group $e^{it\qedsymbol }$ is also defined on $E$, since it is simply the sum of the corresponding groups in each of the spaces. Moreover, define
$$
X_c'=\left\{u\in L^1_{loc}(\real^d): \exists f\in L^2(\real): u(x,\by)=f(x-c\cdot\by)\ a.e. \right\}
$$
and $E'=H^{-1}(\real^d)\oplus X'_c$.
\begin{teo}\label{existenciaplana}
Let $0<\sigma<4/(d-2)^+$. For every $u_0\in E$, there exists $T(u_0)>0$ and a unique solution of \eqref{pvi} $u\in C([0,T(u_0)), E)\cap C^1((0,T(u_0)),E')$ which depends continuously on $u_0$. Also, the blow-up condition holds in the sense that
$$
\lim_{t\to T(u_0)} \|u(t)\|_{E}=\infty, \ \mbox{ if } T(u_0)<\infty.
$$
\end{teo}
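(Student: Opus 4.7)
The strategy is a splitting: writing $u_0 = v_0 + w_0$ with $v_0 \in H^1(\real^d)$ and $w_0 \in X_c$, we look for the solution in the form $u = v + w$, where $w$ is itself a nonlinear spatial plane wave arising from the profile equation \eqref{plane}, and $v$ is a finite-energy perturbation in $H^1$. The decomposition is unique because a nontrivial spatial plane wave has infinite $L^2(\real^d)$-mass (by integration along the hyperplanes $x-c\cdot\by=\mathrm{const}$, which carry infinite Lebesgue measure), so $H^1(\real^d)\cap X_c=\{0\}$.

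To construct $w$, we first solve the one-dimensional profile equation \eqref{plane} in $H^2(\real)$. It is a standard $1$D NLS (with dispersion coefficient $1-|c|^2$, possibly negative, which is immaterial for local well-posedness), and since $H^2(\real)\hookrightarrow W^{1,\infty}(\real)$ makes $f\mapsto |f|^\sigma f$ locally Lipschitz on $H^2(\real)$, the Banach fixed-point method on Duhamel's formula yields $f\in C([0,T_w),H^2(\real))$. Hence $w(t,x,\by)=f(t,x-c\cdot\by)\in C([0,T_w),X_c)$ together with the bound $\|w(t)\|_{W^{1,\infty}(\real^d)}\lesssim\|f(t)\|_{H^2(\real)}$, locally uniform in time.

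Next, $v:=u-w$ should satisfy $v(0)=v_0$ and
$$
iv_t + \qedsymbol v + \lambda\, G(v,w) = 0, \qquad G(v,w) := |v+w|^{\sigma}(v+w) - |w|^{\sigma}w.
$$
For this we exploit that the free propagator $e^{it\qedsymbol}$ factorizes as a backward $1$D Schrödinger group in $x$ times a forward $(d-1)$D Schrödinger group in $\by$, and therefore satisfies the same dispersive bound $\|e^{it\qedsymbol}\|_{L^1\to L^\infty}\lesssim|t|^{-d/2}$ and full Strichartz inequalities on $\real^d$ as $e^{it\Delta}$ (cf.\ \cite{saut1}). A standard contraction in the Strichartz space $Y_T = C([0,T],H^1(\real^d)) \cap L^q([0,T], W^{1,\sigma+2}(\real^d))$ with $(q,\sigma+2)$ an admissible pair will produce a local $v\in Y_T$. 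Uniqueness, continuous dependence, and the blow-up alternative in $E$ then follow in the usual way, noting that $\|u\|_E = \|v\|_{H^1}+\|f\|_{H^2}$ makes the $E$-blow-up alternative equivalent to blow-up of $v$ in $H^1$ or of $f$ in $H^2$.

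The main obstacle is the $H^1$-level nonlinear estimate on $G$, specifically the derivative contribution of the form $|v|(|v|+|w|)^{\sigma-1}\nabla w$ (or, if $\sigma<1$, a term like $|v|^{\sigma}\nabla w$). Since $\nabla w$ is constant along the hyperplanes $x-c\cdot\by=\mathrm{const}$, it lies in no $L^p(\real^d)$ for $p<\infty$, so all integrability must be carried by the $v$-factor, with $\nabla w$ absorbed as an $L^\infty_{t,x}$ multiplier. This is precisely the role of subtracting $|w|^\sigma w$ in the definition of $G$: it ensures $G(0,w)=0$, so every term of $G$ and $\nabla G$ carries at least one factor of $v$ and becomes tractable. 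With this observation, the nonlinear estimate closes under the subcritical condition $\sigma<4/(d-2)^+$ by exactly the same Sobolev embeddings that underlie the standard $H^1$-NLS fixed-point argument.
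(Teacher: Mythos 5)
Your proposal follows essentially the same route as the paper: split $u=v+\phi$ with $\phi\in X_c$ solving the one-dimensional profile equation \eqref{plane} in $H^2(\real)$, and run a Kato/Strichartz fixed point for $v$ in $H^1(\real^d)$, using $\phi,\nabla\phi\in L^\infty$ so that every term of the difference nonlinearity $G(v,\phi)=|v+\phi|^\sigma(v+\phi)-|\phi|^\sigma\phi$ carries a factor of $v$; your identification of the role of subtracting $|\phi|^\sigma\phi$ is exactly the point of the paper's estimates \eqref{estimativaexistencia1}--\eqref{estimativaexistencia2}.

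The one step you pass over too quickly is uniqueness in $E$. Your construction yields \emph{a} solution, but to show that \emph{every} solution $u\in C([0,T),E)\cap C^1((0,T),E')$ has components satisfying the decoupled system \eqref{desacoplado1}--\eqref{desacoplado2}, you must read the equation in $E'=H^{-1}(\real^d)\oplus X_c'$ and conclude that the $H^{-1}$-part and the $X_c'$-part vanish separately. That requires $H^{-1}(\real^d)\cap X_c'=\{0\}$, which is \emph{not} the same as your observation that $H^1(\real^d)\cap X_c=\{0\}$: the latter follows from the infinite-mass computation along hyperplanes, but an element of $H^{-1}\cap X_c'$ is a distribution, and ruling it out needs the translation-invariance/localization argument of the paper's Appendix (Lemma \ref{lemaapendice}), which exploits that elements of $X_c'$ are fixed by the translations $T^c_h$ while a nonzero $H^{-1}$ norm cannot be invariant under summing over infinitely many disjoint translates. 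With that lemma in hand, the rest of your argument (profile equation, contraction for $v$, blow-up alternative via $\|u\|_E=\|v\|_{H^1}+\|f\|_{H^2}$) is the paper's proof.
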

\begin{proof}

On $E$, one actually observes a decoupling of the (HNLS) equation: writing the solution $u$ as $v+\phi$, $v\in H^1(\real^d)$, $\phi\in X_c$, $\phi(x,\by)=f(x-c\cdot\by)$, one has
\begin{equation}\label{lados}
iv_t + \qedsymbol v + \lambda|v+\phi|^\sigma(v+\phi) - \lambda|\phi|^\sigma\phi=-(i\phi_t + \qedsymbol \phi + \lambda|\phi|^\sigma\phi).
\end{equation}

It is clear that the right hand side is in $X'_c$. We claim that the left hand side is in $H^{-1}(\real^d)$: the problem resides on the nonlinear part. Since $f\in H^2(\real)\hookrightarrow W^{1,\infty}(\real)$, one has $\phi\in W^{1,\infty}(\real^d)$. Therefore
$$
\left||v+\phi|^\sigma(v+\phi) - |\phi|^\sigma\phi\right|\lesssim |v|+|v|^{\sigma+1}
$$
Since $v\in H^1(\real^d)$, $|v|^{\sigma+1}\in L^{\frac{\sigma+2}{\sigma+1}}(\real^d)$, the nonlinear term may be written as a sum of a $L^2$ function with a $L^{\frac{\sigma+2}{\sigma+1}}$ function, both of them lying in $H^{-1}(\real^d)$, by Sobolev's injection.

Using the fact that $H^{-1}(\real^d)\cap X_c'=\{0\}$ (see Appendix A), one concludes that both sides of \eqref{lados} are zero:
\begin{align}\label{desacoplado1}
iv_t &+ \qedsymbol v + \lambda|v+\phi|^\sigma(v+\phi) - \lambda|\phi|^\sigma\phi=0\\\label{desacoplado2}
if_t &+ (1-c^2)f_{zz} + \lambda|f|^\sigma f=0
\end{align}

Fix an initial data $u_0=v_0+\phi_0$, where $\phi_0$ has profile $f_0$. For \eqref{desacoplado2}, one may use the $H^2$ local well-posedness result for the one dimensional (NLS) equation and obtain the profile $f$. Then, one focus on \eqref{desacoplado1} and solves for $v$. Since the nonlinear term in \eqref{desacoplado1} satisfies the estimates
\begin{align}
\left|(|v+\phi|^\sigma(v+\phi) - |\phi|^\sigma\phi)-(|w+\phi|^\sigma(w+\phi) - |\phi|^\sigma\phi)\right|&\lesssim (1+|v+\phi|^\sigma + |w+\phi|^\sigma)|v-w|\\&\lesssim (1+|v|^\sigma + |w|^\sigma)|v-w|\label{estimativaexistencia1}
\end{align}
and
\begin{align}
\left|\nabla(|v+\phi|^\sigma(v+\phi) - |\phi|^\sigma\phi)\right|&\lesssim (1+|v|^\sigma + |\phi|^\sigma)|\nabla v|,\label{estimativaexistencia2}
\end{align}
the local existence result for \eqref{desacoplado1} actually follows from the standard Kato's method applied in the (HNLS) context. 

%
%
%
\end{proof}

\begin{nota}
Recall that, in the case $|c|=1$, the explicit solution of \eqref{desacoplado2} is given by 
$$
f(t,z)=f(0,z)e^{i|f(0,z)|^\sigma t}, \ t\in\real,
$$
which implies that, if $f(0,\cdot)\in W^{1,\infty}(\real)$, then $f(t,\cdot)\in W^{1,\infty}(\real)$, for all $t>0$. Since no estimate of $e^{it\qedsymbol}$ over $X_c$ is necessary, one may actually weaken the definition of $X_c$ in this case, by replacing $f\in H^2(\real)$ with $f\in W^{1,\infty}(\real)$.
\end{nota}

\begin{nota}
Fix $d=2$ and $\sigma\ge 1$. We claim that one is able to extend the above local well-posedness result to $H^1(\real^2)\oplus X_{c_1}\oplus X_{c_2}$. In fact, the decoupling of the equation on each of the function spaces still holds (see Appendix): for $u=v+\phi+\psi$, $v\in H^1(\real^2)$, $\phi \in X_{c_1}$ with profile $f$ and $\psi\in X_{c_2}$ with profile $g$, one has
\begin{align}\label{des4}
iv_t &+ \qedsymbol v + \lambda\left(|v+\phi+\psi|^\sigma(v+\phi+\psi) - |\phi|^\sigma\phi - |\psi|^\sigma\psi\right)=0\\
if_t &+ (1-c_1^2)f_{zz} + \lambda|f|^\sigma f=0,\\\label{des6}
ig_t & + (1-c_2^2)g_{zz} + \lambda|g|^\sigma g=0.
\end{align}

We point out that the nonlinear term of the first equation is truly in $H^{-1}(\real^2)$: note that
\begin{align*}
&\left||v+\phi+\psi|^\sigma(\phi+\psi+v)-|\phi|^\sigma\phi - |\psi|^\sigma\psi\right|\\\le& \left||v+\phi+\psi|^\sigma(\phi+\psi+v)-|\phi+\psi|^\sigma(\phi+\psi)\right|+\left||\phi+\psi|^\sigma(\phi+\psi)-|\phi|^\sigma\phi - |\psi|^\sigma\psi\right|\\\lesssim &(|v|^\sigma + |\phi|^\sigma + |\psi|^\sigma)|v| + |\psi|^\sigma|\phi| + |\phi|^\sigma|\psi|.
\end{align*}
Using the fact that $\phi,\psi\in L^\infty(\real^2)$, the terms with $|v|$'s are treated as in the previous proof. It remains to check the last couple of terms. For example, take  $|\phi|^\sigma\psi$. Then
\begin{align*}
\int_{\real^2} |\phi(x,y)|^{2\sigma}|\psi(x,y)|^2dxdy&=\int_{\real^2} |f(x-c_1y)|^{2\sigma}|g(x-c_2y)|^2dxdy\\&= \frac{1}{|c_1-c_2|}\int_{\real^2} |f(w)|^{2\sigma}|g(z)|^2dwdz \le  \frac{1}{|c_1-c_2|}\|f\|_\infty^{2\sigma -2}\|f\|_2^2\|g\|_2^2
\end{align*}
and so $|\phi|^2\psi\in L^2(\real^2)\hookrightarrow H^{-1}(\real^2)$. In fact, \eqref{des4} expresses the interaction between spatial plane waves with different velocities, which turns out to be an $H^1$ function. 
\end{nota}

\begin{nota}
One may apply the construction made in this section to the case of $n$-dimensional spatial plane waves (cf. Remark \ref{nplana}), by replacing the profile space $H^2(\real)$ by $H^k(\real^n)$, with $2k-2>n$, so that $H^k(\real^n)\hookrightarrow W^{1,\infty}(\real^n)$.
\end{nota}

%
%
%
%
\subsection{Stability result}

The aim of this section is to prove that a large class of spatial plane waves is stable with respect to $H^1(\real^d)$ pertubations. The idea is to obtain a global existence result for small data in the large power case for equation \eqref{desacoplado1}. This is not trivial, as one may observe by analyzing \eqref{desacoplado1}: since there are both linear and quadratic in $v$, these lower order terms may disrupt the smallness of $v$. Moreover, it is not clear that the spatial plane wave substrate $\phi$ (which has both infinite mass and energy) does not increase indefinitely the $H^1(\real^d)$ component.

\begin{nota}
One may try to study the linear part of \eqref{desacoplado1}
\begin{equation}\label{linearpotencial}
iv_t + \qedsymbol v + \lambda |\phi|^2v + 2\phi\partere v\bar{\phi}=0,
\end{equation}
which is closely related to decay estimates of $L=i\qedsymbol + iV(t,x)[\cdot]$, where $V(t,x)[v]=\lambda |\phi|^2v + 2\phi\partere v\bar{\phi}$. If one drops the second term in the potential, one might be able to use the results of \cite{fujiwara1}, \cite{fujiwara2} to obtain such estimates. Their method is to consider approximations of the free group $e^{itL}$ given by Feynman's path integral and requires some physical interpretation on the effect of the potential on the motion of quantum particles. However, if one does not drop the second term, this physical interpretation becomes unclear. Moreover, a simple calculation shows that \eqref{linearpotencial} does not conserve the $L^2$ norm.
\end{nota}

Before we proceed, we recall the definiton of admissible pair: we say that the pair $(q,r)$ is admissible if
$$
\frac{2}{q}=d\left(\frac{1}{2}-\frac{1}{r}\right)
$$
with $2\le r\le 2d/(d-2)$ for $d\ge 3$ and $2\le r< \infty$ for $d=2$.

The following Strichartz estimates hold in the same way as for the (NLS) (see \cite{cazenave}):
\begin{enumerate}
\item For every admissible pair $(q,r)$,
$$
\|U(\cdot)\phi\|_{L^q(\real;L^r(\real^d))}\le C\|\phi\|_{2},\quad \forall \phi\in L^2(\real^d);
$$
\item Given two admissible pairs $(q,r)$ and $(\gamma,\rho)$ and an interval $I\subset \real$, let
$$
\Phi_f(t):=\int_{t_0}^t U(t-s)f(s)ds,\ t\in I, t_o\in\bar{I}.
$$
Then there exists $C>0$, independent on $I$, such that
$$
\|\Phi_f\|_{L^q(I;L^r(\real^d))}\le C\|f\|_{L^{\gamma'}(I;L^{\rho'}(\real^d))}, \quad \forall f\in L^{\gamma'}(I;L^{\rho'}(\real^d)).
$$
\end{enumerate}
\begin{teo}\label{teoestabilidade}
Let $\sigma=4$, $d=2$. Suppose that $c\in \real^{d-1}$ is such that $(|c|-1)\lambda>0$. Fix $\phi_0\in X_c$ and let $f_0$ be its profile. If $f_0\in H^2(\real)\cap L^2(\real, x^2dx)\cap W^{1,1}(\real)$, then the plane wave $\phi$ of (HNLS) with initial data $\phi_0$ is $H^1$-stable, i.e., 
$$
\forall \delta>0\ \exists\epsilon>0\ \|v_0\|_{H^1}<\epsilon \Rightarrow \| u-\phi\|_{L^\infty((0,\infty),H^1(\real^d))}<\delta,
$$
where $u$ is the (global) solution in $E$ of (HNLS) with initial data $v_0+\phi_0$.
\end{teo}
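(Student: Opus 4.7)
The plan is to exploit the decoupling \eqref{desacoplado1}--\eqref{desacoplado2} furnished by Theorem~\ref{existenciaplana}: writing $u=v+\phi$ with $v\in H^1(\real^2)$ and $\phi\in X_c$ of profile $f$, the pair $(v,f)$ satisfies \eqref{desacoplado1} and \eqref{desacoplado2} independently. Under the hypothesis $(|c|-1)\lambda>0$, the profile equation for $f$ is, after a harmless spatial rescaling (and a complex conjugation when $|c|>1$, $\lambda>0$), the \emph{defocusing mass-critical} one-dimensional NLS. Hence $f$ is globally defined in $C(\real;H^2(\real))$, and the extra weighted hypothesis $f_0\in L^2(\real,x^2\,dx)\cap W^{1,1}(\real)$ provides, via the pseudo-conformal identity together with the $L^1\!\to\! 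L^\infty$ dispersive estimate for the 1D free Schrödinger group applied to the asymptotic profile, the decay
$$
\|f(t)\|_{W^{1,\infty}(\real)}\lesssim \langle t\rangle^{-1/2},\qquad t\ge 0.
$$
Since $\phi(t,x,\by)=f(t,x-c\cdot\by)$, the same bound is inherited by $\|\phi(t)\|_{W^{1,\infty}(\real^2)}$; in particular, $\int_0^\infty \|\phi(t)\|_{L^\infty}^3\|\phi(t)\|_{W^{1,\infty}}\,dt<\infty$.

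Equation \eqref{desacoplado1} can be rewritten as
$$
iv_t+\qedsymbol v+\lambda L_\phi(v)+\lambda R(v,\phi)=0,
$$
where $L_\phi(v)=3|\phi|^4 v+2|\phi|^2\phi^2\bar v$ is the $\real$-linearization at $v=0$ of $N(v,\phi)=|v+\phi|^4(v+\phi)-|\phi|^4\phi$ and $R(v,\phi)=O(|\phi|^3|v|^2+\cdots+|v|^5)$ collects the strictly superlinear remainder, with analogous pointwise bounds for $\nabla R$ using $\phi\in W^{1,\infty}$. I would then run a Strichartz-based bootstrap on the Duhamel representation
$$
v(t)=e^{it\qedsymbol}v_0-i\lambda\int_0^t e^{i(t-s)\qedsymbol}\bigl(L_\phi(v)+R(v,\phi)\bigr)(s)\,ds
$$
in a space of the form $Y=L^\infty(\real_+;H^1(\real^2))\cap L^q(\real_+;W^{1,r}(\real^2))$ for a suitable admissible pair $(q,r)$ (e.g.\ $(4,4)$). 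The linear part $L_\phi$ is treated by an energy/Gr\"onwall argument: the inequality
$$
\tfrac{d}{dt}\|v(t)\|_{H^1}\lesssim \|\phi(t)\|_{W^{1,\infty}}^4\|v(t)\|_{H^1}+(\text{superlinear terms})
$$
together with the time-integrability of $\|\phi\|_{W^{1,\infty}}^4$ yields a finite multiplicative constant $C_\phi=C_\phi(\phi_0)$ with $\|v(t)\|_{H^1}\le C_\phi\|v_0\|_{H^1}$. The residue $R$ is handled by Strichartz and the $H^1$-subcriticality of $\sigma=4$ in $\real^2$ (so that $H^1\hookrightarrow L^p$ for every $p<\infty$), producing terms that are genuinely small when $\|v\|_Y$ is small. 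Choosing $\varepsilon=\delta/C_\phi$ small enough closes the bootstrap globally and yields $\|u-\phi\|_{L^\infty(\real_+;H^1)}=\|v\|_{L^\infty(\real_+;H^1)}<\delta$.

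The main obstacle, and the reason for the strong hypotheses on $f_0$, is that the linearization $L_\phi$ has coefficients in $|\phi|^4$ carrying \emph{no} a priori smallness: because $\phi$ has infinite mass and infinite energy, the usual small-data Strichartz framework cannot be applied to $L_\phi$ as a black box. The sole available control is the time-integrability of $\|\phi(t)\|_\infty^4$, which is exactly what the pseudo-conformal/dispersive decay of the profile delivers. Without such decay, the Gr\"onwall step yields exponential growth of $\|v(t)\|_{H^1}$ and $H^1$-stability fails; identifying the minimal decay hypothesis on the profile that is compatible with $H^1$-stability is the subtle issue behind the seemingly ad hoc assumptions on $f_0$.
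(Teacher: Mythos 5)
Your overall architecture coincides with the paper's: the decoupling $u=v+\phi$ into \eqref{desacoplado1}--\eqref{desacoplado2}, global existence of the profile for the defocusing mass-critical 1D equation, the $t^{-1/2}$ decay of $\|f(t)\|_{L^\infty}$ from the pseudo-conformal law (data in $L^2(x^2dx)$), the use of the $W^{1,1}$ hypothesis through Duhamel to control $\|\nabla f(t)\|_{L^\infty}$ (the paper only establishes uniform boundedness of the gradient, which suffices), and a Strichartz bootstrap for $v$ in $L^\infty H^1$ intersected with admissible $L^{\gamma_i}W^{1,\rho_i}$ norms. Your handling of the cubic-through-quintic terms and of the quadratic term matches the paper's Steps 3 and 5.

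The gap is in your treatment of the linear term $L_\phi(v)\sim|\phi|^4v$, which is the crux of the theorem. As written, the Grönwall step is circular: the differential inequality $\frac{d}{dt}\|v\|_{H^1}\lesssim\|\phi\|_{W^{1,\infty}}^4\|v\|_{H^1}+(\text{superlinear})$ cannot be closed pointwise in time, because in $d=2$ the quintic term admits no bound of the form $\||v|^4v\|_{H^1}\le F(\|v\|_{H^1})$ (since $H^1(\real^2)\not\hookrightarrow L^\infty$); the superlinear terms are only controlled through the space-time Strichartz norms, and the Duhamel estimate for those norms reintroduces $\|L_\phi(v)\|_{L^1(H^1)}\le\bigl(\int_0^\infty\|\phi(s)\|^4_{W^{1,\infty}}ds\bigr)\|v\|_{L^\infty(H^1)}$, whose coefficient is finite but in no way small and therefore cannot be absorbed into the left-hand side of a bootstrap inequality of the type \eqref{estimativah}. (You correctly observe that no Strichartz theory for the propagator perturbed by $L_\phi$ is available, so the linear term cannot be moved into the free evolution either.) The paper's device is precisely to manufacture smallness here: choose $T$ so that $\sup_{t\ge T}\|\phi(t)\|_{L^\infty}\le\eta$, run the local theory on $[0,T+1]$ where small data stays small, restart at $t=T$, and on $[T,\infty)$ estimate the linear term by $\|v\|_{L^\infty(H^1)}\,\eta^{1/2}$ after splitting $\|\phi\|_{L^\infty}^3=\|\phi\|_{L^\infty}^{1/2}\|\phi\|_{L^\infty}^{5/2}$ with the second factor integrable in time; the coefficient $\eta^{1/2}$ is then absorbed. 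Your Grönwall idea can be repaired --- apply Grönwall to the integral inequality for $\|v(t)\|_{H^1}$ treating the superlinear space-time norms as a nondecreasing forcing, deduce $\|v\|_{L^\infty(H^1)}\le C_\phi(\|v_0\|_{H^1}+P(h))$ with $P$ superlinear, and substitute this back into the estimates for the remaining Strichartz norms --- but this simultaneous two-step closure is exactly what needs to be written down, and it does not follow from the inequality you stated nor from the a priori bound $\|v(t)\|_{H^1}\le C_\phi\|v_0\|_{H^1}$ you asserted before the superlinear terms are under control.
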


\begin{nota}
The result also holds for $\sigma\ge 4$ even: first, the linear term in $v$, which is of the form $|\phi|^\sigma v$, is well-behaved, since it has a stronger time decay; second, the terms with powers higher than $5$ in $v$ can be treated analogously to the quintic term.
\end{nota}

\begin{proof}

\textit{Step 1. Properties of $\phi$.} Recall that $\phi(t,x,y)=f(t,x-cy)$, where $f\in H^2(\real)$ is a solution of
$$
if_t + (1-c^2)f_{zz} + \lambda|f|^4f=0,\ f(0)=f_0.
$$
Since $(|c|-1)\lambda>0$, the above equation is in the defocusing case. Therefore, $f$ is global in $H^2(\real)$ (\cite[Theorem 5.3.1]{cazenave}). 

By hypothesis, $f_0\in H^1(\real)\cap L^2(\real, x^2dx)$ and so it follows from \cite[Theorem 7.3.1]{cazenave} that 
$$
\|f(t)\|_{L^\infty}\le \frac{C}{t^{1/2}}.
$$
Moreover,
$$\|f(t)\|_{L^\infty}\le C\|f(t)\|_{H^1}\le CE(f_0).
$$
Finally, we would like to estimate $\|\nabla f(t)\|_\infty$. From Duhamel's formula, one has
$$
\nabla f(t) = S(t)\nabla f_0 + \int_0^t S(t-s)\nabla(|f(s)|^4f(s))ds.
$$
Hence, for $t>2$,
\begin{align*}
\|\nabla f(t)\|_{L^\infty}\le & \|\nabla  f_0\|_{L^1} + \int_0^t \frac{1}{\sqrt{t-s}}\|f(s)\|_{L^\infty}^3\|f(s)\|_{L^2} \|\nabla f(s)\|_{L^2} ds\\
\lesssim & \| \nabla f_0\|_{L^1} + \int_0^1 \frac{1}{\sqrt{t-s}}E(f_0)^3\|f_0\|_{L^2} E(f_0) ds + \int_1^t \frac{1}{\sqrt{t-s}}\frac{1}{s^{3/2}}\|f_0\|_{L^2} E(f_0) ds\\
\lesssim & \| \nabla f_0\|_{L^1} + C(f_0)\left(\int_0^1 \frac{1}{\sqrt{t-s}} ds +\int_1^{t-1} \frac{1}{\sqrt{t-s}}\frac{1}{s^{3/2}} ds + \int_{t-1}^t \frac{1}{\sqrt{t-s}}\frac{1}{s^{3/2}} ds \right)\\
\lesssim & \|\nabla f_0\|_{L^1} + C(f_0)\left(\frac{1}{\sqrt{t-1}} + \int_1^{t-1}\frac{1}{s^{3/2}}ds + \frac{1}{(t-1)^{3/2}}\int_{t-1}^{t}\frac{1}{\sqrt{t-s}}\right)\\
\lesssim & \|\nabla f_0\|_{L^1} + C(f_0)\left(\frac{1}{\sqrt{t-1}} + 1 + \frac{1}{(t-1)^{3/2}}\right) \le C(f_0).
\end{align*}
For $t\le 2$, a similar procedure allows one to bound $ \|\nabla f(t)\|_{L^\infty}\le C(f_0)$. Therefore $\nabla f$ in uniformly bounded in $L^\infty$. Since $\phi(t,x,y)=f(t,x-cy)$, it follows that
\begin{equation}
\|\phi(t)\|_{L^\infty(\real^2)}\le \min\left\{\frac{C}{t^{1/2}}, CE(f_0)\right\},\quad \|\nabla \phi(t)\|_{L^\infty(\real^2)}\le C(f_0).
\end{equation}

\textit{Step 2. Setup.} Fix $v_0\in H^1(\real^2)$ and consider the corresponding solution $v$ of 
\begin{equation}\label{equacaov}
iv_t + \qedsymbol v + \lambda(|v+\phi|^4(v+\phi) -|\phi|^4\phi)=0.
\end{equation}
We recall that $v$ is defined on $(0,T(u_0))$, where $u_0=v_0+\phi_0$. Since $f$ is global in $H^2(\real)$, the blow-up alternative of theorem \ref{existenciaplana} then implies that, if $T(u_0)<\infty$,
$$
\|v(t)\|_{H^1}\to \infty, \ t\to T(u_0).
$$ 

For a given $\eta>0$, which shall be fixed later, take $T>0$ large enough so that
\begin{equation}
\|\phi(t)\|_{L^\infty}\le \eta,\ t\ge T.
\end{equation}
If $\|v_0\|_{H^1}<\epsilon$ is small enough, the fixed-point used in the local existence result implies that $v$ is defined on $(0,T+1)$ and that $\|v(t)\|_{H^1}\le 2\|v_0\|_{H^1}$, $0<t<T+1$. Therefore, one may, without loss of generality, take as initial data $u(T)=v(T)+\phi(T)$ and prove the stability result.

We develop the nonlinear part as
$$
\lambda(|v+\phi|^4(v+\phi) -|\phi|^4\phi)=\sum_{i=1}^5 g_i(v,\phi),
$$
where each $g_i$ has $i$ powers of $v$ and $5-i$ powers of $\phi$. Define, for $i=3,4,5$, $\rho_i=i+1$ and $\gamma_i$ such that $(\gamma_i,\rho_i)$ is an admissible pair. In particular, $\rho_3=\gamma_3=4$. Consider, for $0<t<T(u_0)$,
$$
h(t)=\|v\|_{L^\infty((0,t),H^1(\real^2))} + \sum_{i=3}^{5} \|v\|_{L^{\gamma_i}((0,t),W^{1,\rho_i}(\real^2))}.
$$
We write Duhamel's formula,
$$
v(t)=U(t)v_0 + \sum_{i=1}^{5}\int_0^t U(t-s)g_i(v(s),\phi(s))ds.
$$
Therefore, for any admissible pair $(q,r)$,
\begin{equation}\label{estimativa}
\|v\|_{L^q((0,t),W^{1,r}(\real^2))}\le C\|v_0\|_{H^1} + \sum_{i=1}^{5} \left\|\int_0^\cdot U(\cdot-s)g_i(v(s),\phi(s))ds\right\|_{L^q((0,t),W^{1,r}(\real^2))}.
\end{equation}
For the sake of simplicity, we shall omit both the temporal and spatial domains. In the next steps, we shall estimate each term of the sum by a suitable power of $h(t)$.

\textit{Step 3. Estimate of higher-order terms in $v$ on \eqref{estimativa}.} Here, we shall estimate
$$
\left\|\int_0^\cdot U(\cdot-s)g_i(v(s),\phi(s))ds\right\|_{L^q(W^{1,r})},\quad i=3,4,5.
$$
Take $i=3$. Then
\begin{align*}
\left\|\int_0^\cdot U(\cdot-s)g_3(v(s),\phi(s))ds\right\|_{L^q(W^{1,r})}&\lesssim \|g_3(v,\phi)\|_{L^{\gamma_3'}(W^{1,\rho_3'})} \\ &\lesssim \||v|^3|\phi|^2\|_{L^{4/3}(W^{1,4/3})}\lesssim \|v\|^3_{L^4(W^{1,4})} \\&\lesssim \|v\|_{L^{\gamma_3}(W^{1,\rho_3})}^3 \lesssim h(t)^3. 
\end{align*}

Now we treat the case $i=4,5$:
\begin{align*}
\left\|\int_0^\cdot U(\cdot-s)g_i(v(s),\phi(s))ds\right\|_{L^q(W^{1,r})}&\lesssim \|g_i(v,\phi)\|_{L^{\gamma_i'}(W^{1,\rho_i'})} \\&\lesssim \||v|^i|\phi|^{5-i}\|_{L^{\gamma_i'}(W^{1,\rho_i'})} \lesssim \|v\|_{L^{\mu_i}(L^{\rho_i})}^{i-1}\|v\|_{L^{\gamma_i}(W^{1,\rho_i})}
\end{align*}
where
$$
\mu_i=\frac{(i-1)(i+1)}{2}>\gamma_i.
$$
Then, through the interpolation $L^{\gamma_i} - L^{\mu_i} - L^\infty$ and the injection $H^1\hookrightarrow L^{\rho_i}$,
$$
\left\|\int_0^\cdot U(\cdot-s)g_i(v(s),\phi(s))ds\right\|_{L^q(W^{1,r})}\lesssim h(t)^i, \ i=4,5.
$$

\textit{Step 4. Estimate of the linear term in $v$.}
\begin{align*}
\left\|\int_0^\cdot U(\cdot-s)g_1(v(s),\phi(s))ds\right\|_{L^q(W^{1,r})}\lesssim \|g_1(v,\phi)\|_{L^1(H^1)}\lesssim \||v||\phi|^4\|_{L^1(H^1)}.
\end{align*}
Using the properties deduced in Step 2,
\begin{align*}
\||v||\phi|^4\|_{L^1(H^1)}&\lesssim \int_0^t \|\phi(s)\|_{L^\infty}^3\|\phi(s)\|_{W^{1,\infty}}\|v(s)\|_{H^1} ds\lesssim \|v\|_{L^\infty(H^1)}\left(\int_0^t \|\phi(s)\|_{L^\infty}^3 ds\right) \\&\lesssim
\|v\|_{L^\infty(H^1)}\|\phi\|_{L^\infty(L^\infty)}^{1/2}\left(1 + \int_1^t \frac{1}{s^{5/4}} ds\right) \lesssim \|v\|_{L^\infty(H^1)}\|\phi\|_{L^\infty(L^\infty)}^{1/2}\lesssim h(t)\eta^{1/2}
\end{align*}

\textit{Step 5. Estimate of the quadratic term in $v$.} 
Recalling that $\phi, \nabla \phi$ are bounded in $L^\infty(L^\infty)$, one has
\begin{align*}
& \left\|\int_0^\cdot U(\cdot-s)g_2(v(s),\phi(s))ds\right\|_{L^q(W^{1,r})}\lesssim \|g_2(v,\phi)\|_{L^{4/3}(W^{1,4/3})}\lesssim \||v|^2|\phi|^3\|_{L^{4/3}(W^{1,4/3})}\\\lesssim&
\left(\int_0^t \int |\phi|^4|v|^{8/3} + |\phi|^4|v|^{4/3}|\nabla v|^{4/3}+ |\phi|^{8/3}|v|^{8/3}|\nabla\phi|^{4/3} \right)^{3/4}\\\lesssim&
\left(\int_0^t \|\phi\|_{L^\infty}^{8/3}\int |v|^2 + |\nabla v|^2 + |v|^4 + |\nabla v|^4\right)^{3/4}\\\lesssim& \left(\int_0^t \|\phi\|_{L^\infty}^{8/3}\int |v|^2 + |\nabla v|^2 + \int_0^t \int |v|^4 + |\nabla v|^4\right)^{3/4}\\\lesssim& \left(\|v\|^2_{L^\infty(H^1)}\left(1+\int_1^t \frac{1}{s^{8/6}}ds\right) + \|v\|_{L^4(W^{1,4})}^4\right)^{3/4} \lesssim h(t)^{3/2} + h(t)^3.
\end{align*}

\textit{Step 6. Conclusion.} Putting together Steps 3, 4 and 5, there exists a universal constant $D$ such that
\begin{equation}\label{estimativah}
h(t)\le D\left(\|v_0\|_{H^1} + h(t)\eta^{1/4} + h(t)^{3/2} + h(t)^3 + h(t)^4 + h(t)^5\right).
\end{equation}
Now, for $\eta< 1/16D^4$,  we arrive at
\begin{equation}
h(t)\lesssim  \|v_0\|_{H^1} +  \left(h(t)^{3/2} + h(t)^3 + h(t)^4 + h(t)^5\right)
\end{equation}
If $\|v_0\|_{H^1}$ is sufficiently small, then the above inequality implies $h(t)\in [0,h_0]\cup [h_1, \infty)$, for some $h_0<\delta, h_1$. Since $h(0)=0$, by continuity, one has $h(t)<\delta$, for all $t<T(u_0)$. The blow-up alternative then implies that $T(u_0)=\infty$, which concludes the proof.
\end{proof}

Set $\sigma=2$ and consider the (NLS) equation in dimension two. Fix an initial data $f_0\in H^3(\real^2)\cap L^2(\real^2, (|x|^2 + |y|^2) dxdy)\cap W^{1,1}(\real^2)$. Since $\sigma=2$ is the $L^2(\real^2)$-critical exponent, \cite[Theorem 6.2.1]{cazenave} may be adapted to prove that, if $\|f_0\|_{H^3}$ is sufficiently small, then the corresponding solution $f$ is global and bounded in $H^3(\real^2)\hookrightarrow W^{1,\infty}(\real^2)$. Moreover, since $\Delta f$ is bounded in $L^2$, one may prove, using Duhamel's formula, that
$$
\|\nabla f(t)\|_\infty\lesssim \frac{1}{t^{1/2}}\|\nabla f_0\|_1,\ \mbox{ for } t \mbox{ large}.
$$
This decay allows one to prove the following:

\begin{teo}\label{teoestabilidade2}
Let $\sigma=2$, $d=3$. Fix a $2$-dimensional spatial plane wave $\phi$ of (HNLS), with speed $|c|>1$, and let $f_0$ be its initial profile. If $f_0\in H^3(\real^2)\cap L^2(\real^2, (|x|^2 + |y|^2) dxdy)\cap W^{1,1}(\real^2)$ has sufficiently small $H^3$ norm, then $\phi$ is $H^1$-stable, i.e., 
$$
\forall \delta>0\ \exists\epsilon>0\ \|v_0\|_{H^1}<\epsilon \Rightarrow \| u-\phi\|_{L^\infty((0,\infty),H^1(\real^d))}<\delta,
$$
where $u$ is the (global) solution in $E$ of (HNLS) with initial data $v_0+\phi_0$.
\end{teo}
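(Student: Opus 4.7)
The proof mirrors that of Theorem \ref{teoestabilidade}. By Theorem \ref{existenciaplana}, (HNLS) is locally well-posed on $E = H^1(\real^3) \oplus X_c$, and writing $u = v + \phi$ decouples the problem into the NLS governing the profile $f$ of $\phi$ (global in $H^3$ by hypothesis, since $\sigma=2$ is $L^2$-critical in dimension two and the data are small) and a perturbed equation for $v \in H^1(\real^3)$. The plan is to set up a bootstrap on mixed Strichartz norms of $v$, exploiting the decay of $\phi$ inherited from $f$. Since $\phi(t, x, \by) = f(t, x - c y_1, y_2)$, the stated estimates give
$$\|\phi(t)\|_{L^\infty(\real^3)} \lesssim \min\{1, t^{-1}\}, \qquad \|\nabla \phi(t)\|_{L^\infty(\real^3)} \lesssim \min\{1, t^{-1/2}\}.$$

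Expanding the nonlinearity yields $|v+\phi|^2(v+\phi) - |\phi|^2\phi = g_1 + g_2 + g_3$, where $g_i$ contains $i$ factors of $v$ and $3-i$ of $\phi$. Fix $\eta>0$ small, choose $T$ large so that $\|\phi(t)\|_\infty \le \eta$ for $t \ge T$, and use the local theory to reduce to initial time $T$ (with $\|v(T)\|_{H^1} \lesssim \|v_0\|_{H^1}$). Define the bootstrap quantity
$$ h(t) = \|v\|_{L^\infty((T,t),H^1)} + \|v\|_{L^{\gamma_3}((T,t),W^{1,\rho_3})}, $$
where $(\gamma_3, \rho_3) = (8/3, 4)$ is admissible in $d=3$ and adapted to the cubic power. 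Applying Strichartz to Duhamel's formula gives an inequality of the form $h(t) \le C\bigl(\|v(T)\|_{H^1} + \sum_{i=1}^3 \|g_i\|_{\mathrm{dual}}\bigr)$.

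The three nonlinear terms are handled as in the earlier proof, with adjustments for $d=3$. For $g_3 = |v|^2 v$, the standard Strichartz/Hölder/Sobolev chain yields $\lesssim h(t)^3$. For $g_1 \sim |\phi|^2 v + \phi^2 \bar v$, I would place it in $L^1((T,\infty), H^1)$: the contributions $|\phi|^2|\nabla v|$ and $|\phi||\nabla \phi||v|$ carry time decays $t^{-2}$ and $t^{-3/2}$ respectively, both integrable, and for $T$ large the tail is bounded by $\eta^\alpha h(t)$ for some $\alpha > 0$ (here the sharp $t^{-1/2}$ decay of $\nabla \phi$ from the preamble is essential). The quadratic term $g_2$ carries only one factor of $\phi$ and thus no time-decay gain; it is controlled by the uniform bound on $\|\phi\|_{W^{1,\infty}}$ together with subcritical Strichartz estimates for $|v|^2$, giving $\lesssim h(t)^2$, possibly after enlarging $h(t)$ by a second auxiliary admissible pair so that the dual exponents line up.

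Assembling these bounds produces $h(t) \le D\bigl(\|v(T)\|_{H^1} + \eta^\alpha h(t) + h(t)^2 + h(t)^3\bigr)$. For $\eta$ small enough to absorb the linear $h$ term and $\|v_0\|_{H^1}$ sufficiently small, the continuity argument of Theorem \ref{teoestabilidade} closes the bootstrap, yielding $h(t) \lesssim \|v_0\|_{H^1}$ uniformly in $t$, and the blow-up alternative of Theorem \ref{existenciaplana} then upgrades this to global existence with $H^1$-stability. The main obstacle I expect is the quadratic term $g_2$: in $d = 3$ the natural admissible pair for the cubic nonlinearity does not directly accommodate the dual norm of $|v|^2\phi$, and since $g_2$ receives no smallness from $\phi$, the correct choice of auxiliary Strichartz exponents (a Kato-type mixed-norm setup enlarging $h(t)$) is the main technical point to get right.
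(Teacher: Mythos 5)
Your architecture is the one the paper intends: it proves this theorem by rerunning Steps 2--6 of the proof of Theorem \ref{teoestabilidade}, using exactly the two facts you isolate, namely the small-data $L^2$-critical global theory for the profile $f$ in $H^3(\real^2)$ and the decays $\|\phi(t)\|_{L^\infty}\lesssim \min(1,t^{-1})$, $\|\nabla\phi(t)\|_{L^\infty}\lesssim \min(1,t^{-1/2})$. Your treatment of the cubic term and of the linear term (placed in $L^1_tH^1_x$, with $\|\phi\|_{L^\infty}^2\sim t^{-2}$ and $\|\phi\|_{L^\infty}\|\nabla\phi\|_{L^\infty}\sim t^{-3/2}$ both integrable) is correct and matches Steps 3 and 4.

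The gap is the quadratic term, and your proposed fix would not close. You claim $g_2\sim |v|^2\phi$ has ``no time-decay gain'' and want to control it using only the uniform bound on $\|\phi\|_{W^{1,\infty}}$ plus a better choice of (possibly exotic) Strichartz exponents. This is impossible over $(0,\infty)$ for scaling reasons: writing $g_2$ in a dual norm $L^{q'}_tL^{r'}_x$ with $(q,r)$ admissible in $d=3$ forces the H\"older factors to satisfy $\sum_i(1/q_i+3/(2r_i))=7/4$, while each factor of $v$ measured in an admissible norm or in $L^\infty_tL^p_x$ with $H^1(\real^3)\hookrightarrow L^p$ contributes at most $3/4$. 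Two factors of $v$ therefore supply at most $3/2$, and the deficit $1/4$ must be paid by $\phi$, i.e.\ one needs $\phi,\nabla\phi\in L^a_tL^\infty_x$ with $a\le 4$; a uniform-in-time bound only gives $a=\infty$, and Kato/Foschi-type non-admissible estimates obey the same scaling constraint, so enlarging $h(t)$ by further pairs cannot help. The missing idea is exactly the decay you dismissed: the single factor of $\phi$ does decay, and $\min(1,t^{-1/2})\in L^4_t$, so for instance
$$
\| |v|^2\nabla\phi\|_{L^{8/5}_tL^{4/3}_x}\le \|\nabla\phi\|_{L^4_tL^\infty_x}\,\|v\|_{L^\infty_tL^2_x}\,\|v\|_{L^{8/3}_tL^4_x}\lesssim h(t)^2,
$$
and similarly for the pieces $|v|^2\phi$ and $|v||\nabla v|\phi$ using $\|\phi\|_{L^4_tL^\infty_x}<\infty$. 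This is the exact analogue of Step 5 of Theorem \ref{teoestabilidade}, where the factor $\|\phi\|_{L^\infty}^{8/3}\approx s^{-4/3}$ is integrated in time. With that substitution your bootstrap inequality and continuity argument go through unchanged.
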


\begin{nota}
One could try to prove a more general result for $n$-dimensional plane waves (cf. Remark \ref{nplana}) in $\real^d$ for $0<\sigma<4/(d-2)^+$. However, this turns out to be impossible using our technique:
\begin{enumerate}
\item First of all, the proof could only work for $\sigma$ even, since one needs to write the nonlinearity as a sum of powers of $|v|$ and apply a different Strichartz estimate to each one of them;
\item From the Sobolev critical exponent, if $d=3$, only $\sigma=2$ is an admissible even power; for $d\ge 4$, one must have $\sigma<2$, which excludes all even powers;
\item In the estimate of the linear term, one needs sufficient decay on $\|\phi\|_\infty$ so that $\|\phi\|^{\sigma}_\infty$ is integrable. However, for $n=1$ and $\sigma=2$, $\|\phi\|^{2}_\infty \approx 1/t$. 
\end{enumerate}
\end{nota}

\section{Hyperbolically symmetric solutions}

In this section, we focus our attention on a particular class of non-integrable solutions of \eqref{pvi} in $d=2$. These solutions are invariant for the hyperbolic invariance (cf. section 2), and so, for each time $t\in [0,T)$, they are constant on the hyperbolas $x^2-y^2=k$. We refer these solutions as having hyperbolic symmetry. More precisely, we look for solutions of \eqref{pvi} of the form
\begin{equation}
u(t,x,y)=\left\{\begin{array}{ll}
\Phi(t,\sqrt{x^2-y^2})=\Phi(t,r),& r^2=x^2-y^2\ge 0\\
\Psi(t,\sqrt{y^2-x^2})=\Psi(t,s), & s^2=y^2-x^2\ge 0
\end{array}\right.,
\end{equation}
with $\Phi,\Psi:[0,T)\times ]0,\infty[\to\complex$.

Fix now $\epsilon>0$. First, we restrict our analysis to the region
$$
D^1_\epsilon=\{(x,y)\in\real^2: x^2-y^2>\epsilon^2\}
$$
and consider the problem
\begin{equation}\label{probregiaoepsilon}
\left\{\begin{array}{ll}
iu_t^\epsilon + \qedsymbol u^\epsilon + \lambda|u^\epsilon|^\sigma u^\epsilon=0,& u^\epsilon=u^\epsilon(t,x,y), (x,y)\in D_\epsilon^1\\
u^\epsilon(0,x,y)=u^\epsilon_0(x,y), & (x,y)\in D^1_\epsilon\\
u^\epsilon(t,x,y)=0, & (x,y)\in \partial D_\epsilon^1,\ t\in [0,T)
\end{array}\right.
\end{equation}
In the following and for simplicity of notation, we shall drop the superscript on $u^\epsilon$. Next, we look for solutions of \eqref{probregiaoepsilon} with hyperbolic symmetry: $u(t,x,y)=\Phi(t,r)$, $r=\sqrt{x^2-y^2}>\epsilon$. Since
$$
\qedsymbol u=\Phi_{rr} + \frac{\Phi_r}{r}, \ r>\epsilon
$$
the (HNLS) becomes
$$
i\Phi_t + \Phi_{rr}+ \frac{\Phi_r}{r}  + \lambda|\Phi|^\sigma\Phi=0
$$
and if we set $\tilde{u}(t,x,y)=\Phi(t,\sqrt{x^2+y^2})$, it follows that problem \eqref{probregiaoepsilon} is formally equivalent to the radial (NLS) problem:
\begin{equation}\label{probNLSepsilon}
\left\{\begin{array}{ll}
i\tilde{u}_t + \Delta \tilde{u} + \lambda|\tilde{u}|^\sigma \tilde{u}=0,& \tilde{u}=\tilde{u}(t,x,y), (x,y)\in \Omega_\epsilon=\real^2\setminus \overline{B(0,\epsilon)}\\
\tilde{u}(0,x,y)=\tilde{u}_0(x,y), & (x,y)\in \Omega_\epsilon\\
\tilde{u}(t,x,y)=0, & (x,y)\in \partial \Omega_\epsilon,\ t\in [0,T)
\end{array}\right..
\end{equation}

It becomes clear that the solutions of \eqref{probregiaoepsilon} with hyperbolic symmetry are closely related to radial solutions of the (NLS) on the exterior domain $\Omega_\epsilon=\real^2\setminus \overline{B(0,\epsilon)}$. In a very similar way, setting
$$
D^2_\epsilon=\{(x,y)\in\real^2: y^2-x^2>\epsilon^2\},
$$
a solution with hyperbolic symmetry $v(t,x,y)=\Psi(t,s)$, $s=\sqrt{y^2-x^2}>\epsilon$, of
\begin{equation}\label{prob2regiaoepsilon}
\left\{\begin{array}{ll}
iv_t+ \qedsymbol v + \lambda|v|^\sigma v=0,& v=v(t,x,y), (x,y)\in D^2_\epsilon\\
v(0,x,y)=v_0(x,y), & (x,y)\in D^2_\epsilon\\
v(t,x,y)=0, & (x,y)\in \partial D^2_\epsilon,\ t\in [0,T)
\end{array}\right.
\end{equation}
corresponds, through the expression $\tilde{v}(t,x,y)=\Psi(t,\sqrt{x^2+y^2})$, to a solution of 
\begin{equation}\label{prob2NLSepsilon}
\left\{\begin{array}{ll}
i\tilde{v}_t - \Delta \tilde{v} + \lambda|\tilde{v}|^\sigma \tilde{v}=0,& \tilde{v}=\tilde{v}(t,x,y), (x,y)\in \Omega_\epsilon=\real^2\setminus \overline{B(0,\epsilon)}\\
\tilde{v}(0,x,y)=\tilde{v}_0(x,y), & (x,y)\in \Omega_\epsilon\\
\tilde{v}(t,x,y)=0, & (x,y)\in \partial \Omega_\epsilon,\ t\in [0,T)
\end{array}\right..
\end{equation}
\begin{nota}
Notice that the (NLS) in \eqref{probNLSepsilon} and \eqref{prob2NLSepsilon} concerns the focusing and defocusing cases, respectively (or vice-versa, according to the sign of $\lambda$). We shall take note of this when we consider the similar problem to \eqref{probregiaoepsilon} on the domain $D_\epsilon^1\cup D_\epsilon^2$.
\end{nota}
Finally, the problem on the domain $D^1_0=\{(x,y)\in \real^2: x^2-y^2>0\}$ (resp. $D^2_0=\{(x,y)\in \real^2: y^2-x^2>0\}$) will be considered as well, namely
\begin{equation}\label{probregiao0}
\left\{\begin{array}{ll}
iv_t+ \qedsymbol v + \lambda|v|^\sigma v=0,& v=v(t,x,y), (x,y)\in D^1_0 \ (\mbox{resp. }(x,y)\in D^2_0)\\
v(0,x,y)=v_0(x,y), & (x,y)\in D^1_\epsilon
\end{array}\right.
\end{equation}

Note that here the problem of finding solutions with hyperbolic symmetry ammounts to the study of the radial (NLS) in all of $\real^2$. Now we can state the following results:

\begin{teo}[Solutions with hyperbolic symmetry on $D^1_\epsilon$]\label{teodepsilon}
Let $\tilde{u}_0\in H^1_0(\Omega_\epsilon)$, $\Omega_\epsilon=\real^2\setminus \overline{B(0,\epsilon)}$ be a radial function, $\tilde{u}_0(x,y)=\Phi_0(\rho)$, with $\rho=\sqrt{x^2+y^2}$. Then, for $0<\sigma<\infty$, there exists $T(\Phi_0)>0$ and a semiclassical solution with hyperbolic symmetry of \eqref{probregiaoepsilon}
$$
u\in C([0,T(\Phi_0)); L^\infty(D^1_\epsilon)),
$$
with initial data $u(0,x,y)=\Phi_0(\sqrt{x^2-y^2})$, and one has the blow-up alternative:
$$
T(\Phi_0)<\infty\Rightarrow \limsup_{t\to T(\Phi_0)} \|u(t)\|_{\infty}=\infty.
$$
In addition, let us consider the following cases:
\begin{enumerate}
\item If $0<\sigma<4$, then $T(\Phi_0)=\infty$;
\item If $4\le \sigma <\infty$ and
\begin{enumerate}
\item $\lambda<0$, then $T(\Phi_0)=\infty$;
\item $\lambda>0$, let
$$
E(\Phi_0)=E(\tilde{u}_0):=\frac{1}{2}\int_{\Omega_\epsilon} |\nabla \tilde{u}_0|^2 - \frac{\lambda}{\sigma+2}\int_{\Omega_\epsilon} |\tilde{u}_0|^{\sigma+2}
$$
be the energy associated with \eqref{probNLSepsilon}. Assume that $\tilde{u}_0\in H^2(\Omega_\epsilon)\cap L^2(\Omega_\epsilon, (|x|^2+|y|^2)dxdy)$ and that the energy $E(\tilde{u}_0)$ verifies one of the following conditions:
\begin{enumerate}
\item $E(\tilde{u}_0)<0$;
\item $E(\tilde{u}_0)\ge 0$ and, for $\theta(x)=\frac{1}{2}|x|^2 - \epsilon^2\log|x|$,
$$
\parteim \int_{\Omega_\epsilon} (\nabla \theta\cdot \nabla \tilde{u}_0)\overline{\tilde{u}_0} >0,\ \left|\parteim \int_{\Omega_\epsilon} (\nabla \theta\cdot \nabla \tilde{u}_0)\overline{\tilde{u}_0}\right|^2\ge 8E(\tilde{u}_0)\int_{\Omega_\epsilon} \theta|\tilde{u}_0|^2;
$$
\end{enumerate}
Then $T(\Phi_0)<\infty$.
\end{enumerate}
\end{enumerate}
\end{teo}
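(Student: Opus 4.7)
The plan is to exploit the formal equivalence, already noted in the text, between \eqref{probregiaoepsilon} and the radial (NLS) problem \eqref{probNLSepsilon} on the exterior domain $\Omega_\epsilon = \real^2 \setminus \overline{B(0,\epsilon)}$. First I would make this correspondence rigorous: given a radial solution $\tilde{u} \in C([0,T);H^1_0(\Omega_\epsilon))$ of \eqref{probNLSepsilon} with profile $\Phi(t,\cdot)$, the calculation $\qedsymbol u = \Phi_{rr}+\Phi_r/r$ (displayed just before the theorem) shows that $u(t,x,y):=\Phi(t,\sqrt{x^2-y^2})$ solves \eqref{probregiaoepsilon} in the distributional sense on $D^1_\epsilon$. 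The crucial embedding here is that radial functions in $H^1_0(\Omega_\epsilon)$ are \emph{bounded}, with $\|\tilde{u}\|_{L^\infty}^2 \le C_\epsilon\|\tilde{u}\|_{L^2}\|\nabla \tilde{u}\|_{L^2}$: the factor $\rho\ge\epsilon$ in the radial measure compensates the missing weight, so the $H^1\hookrightarrow L^\infty$ embedding on the half-line $(\epsilon,\infty)$ applies. This places $u$ in $C([0,T);L^\infty(D^1_\epsilon))$ and transfers any $H^1$ blow-up for $\tilde{u}$ into the $L^\infty$ blow-up alternative for $u$.

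Local well-posedness for \eqref{probNLSepsilon} restricted to radial $H^1_0(\Omega_\epsilon)$ data follows from a standard Kato-type fixed-point argument in Strichartz spaces, valid in the full range $0<\sigma<\infty$ since $d=2$ is $H^1$-subcritical; radiality combined with the Dirichlet condition essentially reduces the matter to an ODE-type equation on $(\epsilon,\infty)$. Global existence in the $L^2$-subcritical regime $0<\sigma<4$ then follows from Gagliardo--Nirenberg plus conservation of mass and energy, which yields an a priori bound on $\|\nabla\tilde{u}(t)\|_{L^2}$. In the defocusing case $\sigma\ge 4$, $\lambda<0$, both terms in $E(\tilde{u})$ are nonnegative, so conservation of energy alone bounds $\|\nabla\tilde{u}\|_{L^2}$ and gives $T(\Phi_0)=\infty$.

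For the focusing supercritical case $\sigma\ge 4$, $\lambda>0$, I would prove blow-up via a Glassey-type variance identity with the weight $\theta(x)=\tfrac12|x|^2-\epsilon^2\log|x|$, which is tailor-made for $\Omega_\epsilon$: its gradient $\nabla\theta(x)=(1-\epsilon^2/|x|^2)x$ \emph{vanishes on} $\partial\Omega_\epsilon$, which kills the boundary contributions from the Dirichlet integrations by parts; furthermore $\Delta\theta\equiv 2$ on $\Omega_\epsilon$ (since $\Delta\log|\cdot|=0$ off the origin in $\real^2$) and a direct computation gives $\mathrm{Hess}\,\theta\ge 0$ on $\Omega_\epsilon$. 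The standard variance computation then yields, for $\sigma\ge 4$,
$$
\frac{d^2}{dt^2}\int_{\Omega_\epsilon}\theta\,|\tilde{u}(t)|^2\,dx \le 16\,E(\tilde{u}_0),
$$
while $V_\theta(t):=\int_{\Omega_\epsilon}\theta\,|\tilde{u}|^2$ is nonnegative (up to adding a constant multiple of the conserved mass, which does not affect the second derivative). Under either hypothesis (i) ($E<0$) or (ii), which is precisely the quantitative condition ensuring that the quadratic majorant $V_\theta(0)+V_\theta'(0)t+8E(\tilde{u}_0)t^2$ reaches zero at some finite $t_\ast$, the convexity argument of Glassey forces $V_\theta$ to vanish by time $t_\ast$, contradicting its nonnegativity unless $\|\nabla\tilde{u}\|_{L^2}$ blows up first, yielding $T(\Phi_0)<\infty$.

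The main technical obstacle I anticipate is the rigorous justification of the virial identity on the exterior domain: the necessary integrations by parts require both $H^2$-regularity and weighted $L^2$ control of $\tilde{u}$, which is precisely why the extra hypotheses $\tilde{u}_0\in H^2(\Omega_\epsilon)\cap L^2(\Omega_\epsilon,(|x|^2+|y|^2)dxdy)$ appear in part (2b). Propagation of $H^2$-regularity by the NLS flow is standard, and the weighted $L^2$ bound propagates through the second-order differential inequality coming from the identity itself; together these legitimate the formal calculation. The vanishing of $\nabla\theta$ on $\partial\Omega_\epsilon$ is the one non-standard ingredient, specifically chosen to eliminate all boundary contributions.
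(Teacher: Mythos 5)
Your proposal follows essentially the same route as the paper: reduce to the radial (NLS) on the exterior domain $\Omega_\epsilon$, use the Strauss-type radial estimate $\|\tilde{u}\|_{L^\infty}\le C\|\nabla\tilde{u}\|_{L^2}^{1/2}\|\tilde{u}\|_{L^2}^{1/2}$ to land in the $L^\infty$ framework, conservation of mass and energy for cases (1) and (2a), and a virial argument with the weight $\theta$ for (2b). The difference is that the paper outsources the two delicate steps to the literature --- local well-posedness in $H^1_0(\Omega_\epsilon)$ is taken from Burq--G\'erard--Tzvetkov and the blow-up from Kavian's Proposition 1.6 --- whereas you reconstruct them. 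Three points deserve care if you carry out the reconstruction. First, local well-posedness on an exterior domain is not ``standard Kato in Strichartz spaces'': Strichartz estimates outside an obstacle are a nontrivial matter, which is precisely why the paper cites \cite{tzvetkov}. Second, your claim $\mathrm{Hess}\,\theta\ge 0$ is true but is not the property you need; the relevant issue is an \emph{upper} bound, and the radial eigenvalue of $\mathrm{Hess}\,\theta$ equals $1+\epsilon^2/|x|^2>1$, so for radial data the Hessian term \emph{exceeds} $4\int|\nabla\tilde u|^2$. The excess $4\epsilon^2\int|x|^{-2}|\tilde u_r|^2\le 4\int|\nabla\tilde u|^2$ is absorbed only at the cost of doubling the gradient contribution, and this is exactly why the threshold is $\sigma\ge 4$ on $\Omega_\epsilon$ (versus $\sigma\ge 2$ on all of $\real^2$ in Theorem \ref{teod0}) and why the constant is $16E$; your statement records the right conclusion but skips the one computation that makes it true. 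Third, the radial embedding controls $L^\infty$ \emph{by} $H^1$, so it does not by itself transfer $H^1$ blow-up of $\tilde u$ into the $L^\infty$ blow-up alternative; one needs the converse step (as in the paper: if $\|\tilde u(t)\|_{L^\infty}$ stayed bounded, energy conservation --- or Duhamel plus Gronwall at the $H^2$ level --- would bound the Sobolev norm, a contradiction).
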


\begin{teo}[Solutions with hyperbolic symmetry on the cone $D^1_0$]\label{teod0}
Let $\tilde{u}_0\in H^2(\real^2)$, be a radial function, $\tilde{u}_0(x,y)=\Phi_0(\rho)$, with $\rho=\sqrt{x^2+y^2}$. Then, for $0<\sigma<\infty$, there exists $T(\Phi_0)>0$ and a semiclassical solution with hyperbolic symmetry of \eqref{probregiaoepsilon}
$$
u\in C([0,T(\Phi_0)); L^\infty(D^1_0)),
$$
with initial data $u(0,x,y)=\Phi_0(\sqrt{x^2-y^2})$, and one has the blow-up alternative:
$$
T(\Phi_0)<\infty\Rightarrow \limsup_{t\to T(\Phi_0)} \|u(t)\|_{\infty}=\infty.
$$
In addition, let us consider the following cases:
\begin{enumerate}
\item If $0<\sigma<2$, then $T(\Phi_0)=\infty$;
\item If $2\le \sigma <\infty$ and
\begin{enumerate}
\item $\lambda\|\tilde{u}_0\|_2^2<4$, then $T(\Phi_0)=\infty$;
\item $\lambda>0$, let
$$
E(\Phi_0)=E(\tilde{u}_0):=\frac{1}{2}\int |\nabla \tilde{u}_0|^2 - \frac{\lambda}{\sigma+2}\int|\tilde{u}_0|^{\sigma+2}.
$$
Suppose that $\tilde{u}_0\in  L^2(\real^2, (|x|^2+|y|^2)dxdy)$ and that $E(\tilde{u}_0)$ verifies one of the following conditions:
\begin{enumerate}
\item $E(\tilde{u}_0)<0$;
\item $E(\tilde{u}_0)\ge 0$ and
$$
\parteim \int (x\cdot \nabla \tilde{u}_0)\overline{\tilde{u}_0} >0,\ \left|\parteim \int (x\cdot \nabla \tilde{u}_0)\overline{\tilde{u}_0}\right|^2\ge 4E(\tilde{u}_0)\int |x\tilde{u}_0|^2;
$$
\end{enumerate}
Then $T(\Phi_0)<\infty$ and, for $2\le\sigma<4$, the blow-up is taken at the cone $C=\{(x,y)\in\real^2: x^2-y^2=0\}$ in the sense that
\begin{equation}\label{concentracao}
\forall\epsilon>0\ \liminf_{t\to T(\Phi_0)} \|u(t)\|_{L^\infty(D^1_0\setminus D^1_\epsilon)}=\infty
\end{equation}

\end{enumerate}
\end{enumerate}
\end{teo}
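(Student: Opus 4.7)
My plan rests on the reduction just established: a hyperbolically symmetric $u(t,x,y) = \Phi(t,\sqrt{x^2-y^2})$ on $D^1_0$ solves \eqref{pvi} if and only if $\tilde{u}(t,x,y) = \Phi(t,\sqrt{x^2+y^2})$ solves the radial (NLS) on all of $\real^2$, since the radial Laplacian produces the same equation on $\Phi$ in both settings. Under this bijection, the $L^\infty$ and $H^k$ norms of the two functions agree, so the first step is to apply the standard $H^2$-local well-posedness for (NLS) to $\tilde{u}_0$ and use the embedding $H^2(\real^2) \hookrightarrow L^\infty(\real^2)$ to transfer the existence interval and the blow-up alternative back to $u$.

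For the global existence cases: case (1), $0 < \sigma < 2 = 4/d$, is mass-subcritical on $\real^2$ and is handled via mass conservation plus Gagliardo-Nirenberg bounding the energy, with standard $H^2$-persistence. Case (2)(a) is either the defocusing regime ($\lambda \le 0$, where the energy is immediately coercive) or the focusing regime with $\lambda\|\tilde{u}_0\|_2^2 < 4$, in which the sharp Gagliardo-Nirenberg inequality (with constant compatible with the mass threshold) again renders the energy coercive in $H^1$. In either subcase one obtains a global $H^2$ bound on $\tilde{u}$.

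For the blow-up assertion in case (2)(b), the next step is the variance identity on $\real^2$: setting $V(t) = \int |x|^2 |\tilde{u}(t)|^2$, one has
$$V''(t) = 8E(\tilde{u}_0) + 4\lambda\left(\frac{2d+4}{\sigma+2}-d\right)\int|\tilde{u}(t)|^{\sigma+2} \le 8E(\tilde{u}_0),$$
because the coefficient of the nonlinear term is non-positive for $d=2$ and $\sigma\ge 2$ while $\lambda>0$. Hypothesis (i) $E(\tilde{u}_0)<0$ forces $V$ to become negative in finite time upon double integration; hypothesis (ii), which controls $V'(0)$ from below in a way compatible with the non-negative quadratic $\tau\mapsto V(0) + V'(0)\tau + 4E(\tilde{u}_0)\tau^2$ developing a zero, likewise contradicts $V\ge 0$ in finite time. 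This yields finite-time $L^\infty$ blow-up of $\tilde{u}$, hence of $u$.

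The main obstacle is the concentration statement \eqref{concentracao} in the regime $2\le\sigma<4$, which asserts that the blow-up of $\tilde{u}$ must occur at the origin $\rho=0$. My plan here is an argument by contradiction: if $\|\tilde{u}(t)\|_{L^\infty(B(0,\epsilon))}$ were to stay bounded as $t\to T(\Phi_0)$, then the restriction of $\tilde{u}$ to the exterior domain $\Omega_\epsilon$ would carry bounded Dirichlet trace on $\partial B(0,\epsilon)$. Since the radial Laplacian on $\Omega_\epsilon$ avoids the origin and $\sigma<4$ is mass-subcritical for the effectively one-dimensional radial equation there, one should be able to adapt Theorem \ref{teodepsilon}(1) to bounded non-zero boundary data — for instance by subtracting a smooth extension of the trace and solving the resulting inhomogeneous equation — to obtain a uniform $L^\infty$ bound on $\tilde{u}|_{\Omega_\epsilon}$, contradicting the $L^\infty$-blow-up of $\tilde{u}$ on $\real^2$. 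Making this extension/comparison argument rigorous, so that the global result on $\Omega_\epsilon$ genuinely applies despite the forcing induced by the trace extension, is the delicate point.
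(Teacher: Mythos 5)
Your reduction to the radial (NLS) on $\real^2$, your handling of the global existence cases via (sharp) Gagliardo--Nirenberg together with $H^2$-persistence, and your virial/convexity argument for finite-time blow-up all follow the same route as the paper, which at this stage simply invokes the well-known global existence and blow-up theory for (NLS) in $\real^2$ (your constants differ from the paper's normalization, where $V''=16E(\tilde{u}_0)+\cdots$, but this is immaterial to the argument).

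The genuine gap is in your treatment of the concentration statement \eqref{concentracao}. First, the negation of \eqref{concentracao} only furnishes a sequence $t_n\to T(\Phi_0)$ along which $\|\tilde{u}(t_n)\|_{L^\infty(B(0,\epsilon))}$ is bounded; it gives no control of the trace of $\tilde{u}$ on $\partial B(0,\epsilon)$ on a whole left-neighbourhood of $T(\Phi_0)$, so there is no time interval on which your exterior problem can be posed and solved, and in any case boundedness of $\|\tilde{u}(t_n)\|_{L^\infty(\real^2)}$ along a sequence would not contradict the blow-up alternative, which is a $\limsup$ statement. Second, Theorem \ref{teodepsilon} is proved in $H^1_0(\Omega_\epsilon)$, i.e., with homogeneous Dirichlet data: the conservation laws and the radial inequality \eqref{desigualdaderadial} that drive the global bound are simply not available for the inhomogeneous, time-dependent (and solution-dependent) boundary data obtained by restriction, and subtracting an extension of the trace introduces a forcing term controlled only by the unknown solution near the blow-up time. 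The paper avoids all of this by quoting the concentration result of Merle--Tsutsumi (\cite[Remark 3.1]{merle}) for radial blow-up solutions, which gives directly that for every $\epsilon>0$ one has $\liminf_{t\to T(\Phi_0)}\|\tilde{u}(t)\|_{L^\infty(\{x^2+y^2<\epsilon^2\})}=\infty$; transferring back through $\rho=\sqrt{x^2+y^2}\mapsto r=\sqrt{x^2-y^2}$ yields \eqref{concentracao}. You would need either to import such a concentration theorem or to supply a complete substitute; as written, your contradiction scheme does not close.
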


\begin{proof}
We start with the proof of Theorem \ref{teodepsilon}. The local existence and uniqueness of solution for the problem \eqref{probNLSepsilon} with $\tilde{u}_0\in H^1_0(\Omega_\epsilon)$ is a consequence of \cite[Theorem 1]{tzvetkov}: there exists $T(\tilde{u}_0)=T(\Phi_0)>0$ and a unique maximal solution $\tilde{u}\in C([0,T(\Phi_0)); H^1_0(\Omega_\epsilon))$ of the problem \eqref{probNLSepsilon}. Since $\tilde{u}_0$ is radial symmetric, it follows by uniqueness that $\tilde{u}(t)$ is also radial symmetric for all $t\in [0,T(\Phi_0))$.

On the other hand, using the classical inequality for radial functions
\begin{equation}\label{desigualdaderadial}
\|\tilde{u}\|_{L^\infty(\Omega_\epsilon)} \le C\|\nabla \tilde{u}\|_{L^2(\Omega_\epsilon))}^{\frac{1}{2}}\|\tilde{u}\|_{L^2(\Omega_\epsilon))}^{\frac{1}{2}}.
\end{equation}
one has $\tilde{u}\in C([0,T_{max}), L^\infty(\Omega_\epsilon))$, $\tilde{u}=\Phi(t,\rho)$. Setting $u(t,x,y)=\Phi(t,\sqrt{x^2-y^2})$, it follows that $u$ is a solution of \eqref{probregiaoepsilon} in the distributional sense.

We recall that $\tilde{u}$ satisfies the following conservation laws:
\begin{equation}\label{conservacao}
\int_{\Omega_\epsilon}|\tilde{u}(t)|^2 = \int_{\Omega_\epsilon} |\tilde{u}_0|^2,\ E(\tilde{u}(t))=E(\tilde{u}_0),\ 0<t<T(\Phi_0).
\end{equation}

We derive, from \eqref{desigualdaderadial} and \eqref{conservacao},
\begin{align*}
\int_{\Omega_\epsilon} |\nabla \tilde{u}(t)|^2 &\le 2E(\tilde{u}_0) + \frac{2}{\sigma+2}\|\tilde{u}_0\|_{L^2(\Omega_\epsilon)}^2\|\tilde{u}(t)\|_{L^\infty(\Omega_\epsilon)}^\sigma\\&\le 2E(\tilde{u}_0) + \frac{2}{\sigma+2}\|\tilde{u}_0\|_{L^2(\Omega_\epsilon)}^{2+\frac{\sigma}{2}}\left(\int_{\Omega_\epsilon}|\nabla \tilde{u}(t)|^2\right)^{\frac{\sigma}{4}}.
\end{align*}
For $\sigma<4$, we obtain the control of the norm $\|\nabla \tilde{u}(t)\|_{L^2(\Omega_\epsilon)}$ and the solution is global.  The case 2.(a) is trivial by \eqref{conservacao}. Finally, under the assumptions of case 2.(b), it follows that the maximal time of existence, $T_{max}$, of the solution
$$
u\in C([0,T_{max}), H^2(\Omega_\epsilon)\cap H_0^1(\Omega_\epsilon))\cap C^1((0,T_{max}), L^2(\Omega_\epsilon)),
$$
is finite (cf. \cite[Proposition 1.6]{kavian}). We claim that this implies 
$$
\limsup_{t\to T_{max}} \|\tilde{u}(t)\|_{L^\infty(\Omega_\epsilon)}=\infty.
$$
Indeed, if this was not true, using Duhamel's formula and Gronwall's lemma, $\|\tilde{u}(t)\|_{H^2}$ should be bounded in $[0,T_{max})$, which is absurd. Hence $T(\Phi_0)=T_{max}$ and the proof of Theorem \ref{teodepsilon} is concluded.

The same procedure is used in Theorem \ref{teod0}, but now we have $\real^2$ instead of $\Omega_\epsilon$. Hence, the proof amounts to the well-known global existence and blow-up results for (NLS) in $\real^2$. In the case 2(b), $2\le\sigma<4$, we have the concentration of $\tilde{u}$ at the origin (cf. \cite[Remark 3.1]{merle}) in the sense that
$$
\forall \epsilon>0\ \liminf_{t\to T(\Phi_0)} \|u(t)\|_{L^\infty(\{x^2+y^2<\epsilon^2\})}=\infty,
$$
which implies \eqref{concentracao}.
\end{proof}
\begin{nota}
The above results are obviously valid for the domains $D^2_\epsilon$ and $D^2_0$: one must simply replace $\lambda$ by $-\lambda$.
\end{nota}

Finally, one may consider the problems
\begin{equation}\label{probregiaosemdiagonais}
\left\{\begin{array}{ll}
iu_t+ \qedsymbol u + \lambda|u|^\sigma u=0,& u=u(t,x,y), (x,y)\in D_\epsilon=D^1_\epsilon\cup D^2_\epsilon\\
u(0,x,y)=u_0(x,y), & (x,y)\in D_\epsilon\\
u(t,x,y)=0, & (x,y)\in \partial D_\epsilon, \ t\in [0,T)
\end{array}\right.
\end{equation}
and
\begin{equation}\label{probregiaodoisepsilon}
\left\{\begin{array}{ll}
iu_t+ \qedsymbol u + \lambda|u|^\sigma u=0,& u=u(t,x,y), (x,y)\in D_0=D^1_0\cup D^2_0 \\
u(0,x,y)=u_0(x,y), & (x,y)\in D_0
\end{array}\right.
\end{equation}
and build $L^\infty$ solutions by gluing solutions on regions $D^1_\epsilon$ and $D^2_\epsilon$ (resp. $D^1_0$ and $D^2_0$). Over $D_\epsilon$ (resp. $D_0$), in the case $\sigma<4$ (resp. $\sigma<2$), the solutions are always global. Otherwise, we note that the sign of $\lambda$ is not sufficient to guarantee global existence: if $\lambda>0$, then the equation is focusing on regions $D^1_0$ and $  D^1_\epsilon$; if $\lambda<0$, then it is focusing on $D^2_0$ and  $D^2_\epsilon$. In either case, one may observe finite-time blow-up.

\begin{nota}\label{naocola}
Fix a continuous initial data $u_0\in C(\real^2)$ with hyperbolic symmetry. Consider the radial (NLS) counterparts $\widetilde{u_0\big|_{D_0^1}}$ and $\widetilde{u_0\big|_{D_0^2}}$. Assuming these are $H^1$ functions, one may build a $L^\infty$ solution $u$ of (HNLS) on $D_0$ with initial data $u_0$. The question is wether the continuity of $u_0$ over $\{|y|=|x|\}$ remains valid for $u$ (in the sense that $u$ admits a continuous extension to $\real^2$). The answer is, in general, negative: take $Q$ to be the positive radial ground-state of (NLS) in $\real^2$ and consider the pseudo-conformal transform of $V=e^{it}Q$,
$$
W(t,x,y)= (1-t)^{-1}V\left(\frac{t}{1-t}, \frac{x}{1-t}, \frac{y}{1-t}\right)\exp\left(-i\frac{(x^2+y^2)}{4(1-t)}\right).
$$
One easily checks that $W$ is radial, $W(0,0,0)=V(0,0,0)=Q(0)$ and $|W(t,0,0)|=(1-t)^{-1}|V(t,0,0)|=(1-t)^{-1}Q(0,0)$. Then, setting
$$
u_0(x,y)=\left\{\begin{array}{ll}
Q(\sqrt{x^2-y^2}), & x^2-y^2>0\\
e^{-i\frac{y^2-x^2}{4}}Q(\sqrt{y^2-x^2}), & y^2-x^2>0
\end{array}\right.,
$$
the corresponding solution is given by
$$
u(t,x,y)=\left\{\begin{array}{ll}
V(t,\sqrt{x^2-y^2}), & x^2-y^2>0\\
W(t,\sqrt{y^2-x^2}), & y^2-x^2>0
\end{array}\right.,
$$
which is not continuous at $\{|y|=|x|\}$ for any positive time $t>0$.
\end{nota}

\section{Spatial standing waves}

One of the ways to overcome the presence of a negative direction is to search for solutions of the (HNLS) of the form $u(t,x,\by)=e^{i\omega x}\phi(t,\by)$, (somehow in analogy to the usual notion of bound-state - recall that in some models of nonlinear optics, these are truly time-periodic solutions). Inserting this expression into the equation,
\begin{equation}
i\phi_t -\omega^2 \phi - \Delta_\by \phi + \lambda |\phi|^{\sigma}\phi =0.
\end{equation}
Setting $v(t,\by)=e^{-i\omega^2 t}\phi(-t,\by)$, one arrives to
\begin{equation}
iv_t  + \Delta_\by v - \lambda |v|^{\sigma}v =0.
\end{equation}
which is the (NLS) in $\real^{d-1}$. Consider the initial value problem
\begin{equation}
iv_t  + \Delta_\by v - \lambda |v|^{\sigma}v =0, v(0,\by)=v_0(\by)\in H^1(\real^{d-1}).
\end{equation}
As it is well-known,
\begin{enumerate}
\item for $\lambda>0$ or $\sigma<4/(d-1)$, one has global existence of solutions in $H^1(\real^{d-1})$;
\item for $\lambda<0$ and $\sigma>4/(d-1)$, initial data $v_0\in H^1(\real^{d-1})\cap L^2(\real^{d-1}, |\by|^2d\by)$ with negative energy blows up in finite time.
\end{enumerate}
\subsection{Local existence and stability}
As in section \ref{seccaoexistlocal}, one may build a local well-posedness theory to include both $H^1$ solutions and spatial standing waves. Set $k=\floor{\frac{d+1}{2}}+1$.
If one defines
\begin{equation}
Y_\omega=\left\{\phi\in L^1_{loc}(\real^d): \exists f\in H^k(\real^{d-1}): \phi(x,\by)=e^{i\omega x}f(\by)\ a.e.\right\},
\end{equation}
\begin{equation}
Y'_\omega=\left\{\phi\in L^1_{loc}(\real^d): \exists f\in H^{k-2}(\real^{d-1}): \phi(x,\by)=e^{i\omega x}f(\by)\ a.e.\right\}
\end{equation}
and set $F=H^1(\real^d)\oplus Y_\omega$ and $F'=H^{-1}(\real^d)\oplus Y'_\omega$, then one has the following
\begin{teo}\label{existenciastanding}
Let $0<\sigma<4/(d-2)^+$. For every $u_0\in F$, there exists $T(u_0)>0$ and a unique solution of \eqref{pvi} $u\in C([0,T(u_0)), F)\cap C^1((0,T(u_0)),F')$ which depends continuously on $u_0$. Also, the blow-up condition holds in the sense that
$$
\lim_{t\to T(u_0)} \|u(t)\|_{F}=\infty,\ \mbox{ if } T(u_0)<\infty.
$$
\end{teo}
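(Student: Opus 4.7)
The plan is to follow the decoupling strategy of Theorem \ref{existenciaplana} very closely. Writing the candidate solution as $u = v + \phi$ with $v \in H^1(\real^d)$ and $\phi(x,\by) = e^{i\omega x}f(\by) \in Y_\omega$, and substituting into \eqref{pvi}, one arrives at
$$
iv_t + \qedsymbol v + \lambda\bigl(|v+\phi|^\sigma(v+\phi) - |\phi|^\sigma \phi\bigr) = -\bigl(i\phi_t + \qedsymbol \phi + \lambda|\phi|^\sigma\phi\bigr).
$$
The right-hand side equals $-e^{i\omega x}\bigl(if_t - \omega^2 f - \Delta_\by f + \lambda|f|^\sigma f\bigr)$, which belongs to $Y_\omega'$ as soon as $f \in H^k(\real^{d-1})$ (and the nonlinear power preserves $H^{k-2}$ regularity). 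The aim is to show that the left-hand side lies in $H^{-1}(\real^d)$, so that both sides must vanish and we recover two decoupled evolution equations.

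The specific choice $k = \floor{(d+1)/2}+1$ is tailored so that $H^k(\real^{d-1}) \hookrightarrow W^{1,\infty}(\real^{d-1})$, hence both $\phi$ and $\nabla\phi$ are uniformly bounded on $\real^d$. Using the pointwise bound
$$
\bigl||v+\phi|^\sigma(v+\phi) - |\phi|^\sigma\phi\bigr| \lesssim |v| + |v|^{\sigma+1},
$$
exactly as in Theorem \ref{existenciaplana}, the nonlinear term decomposes into an $L^2$ piece and an $L^{(\sigma+2)/(\sigma+1)}$ piece, both continuously embedded into $H^{-1}(\real^d)$ by Sobolev. The key algebraic fact $H^{-1}(\real^d) \cap Y_\omega' = \{0\}$, which is the analogue of the result in Appendix A for the plane wave case, can be established by a Fourier-in-$x$ argument: any $\psi(x,\by) = e^{i\omega x}g(\by) \in Y_\omega'$ has Fourier transform $\hat\psi(\xi_1,\boldsymbol\eta) = 2\pi\delta(\xi_1 - \omega)\hat g(\boldsymbol\eta)$, a tempered distribution supported on a hyperplane, and such an object belongs to $H^{-1}(\real^d)$ only if $g \equiv 0$.

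With the decoupling justified, the profile $f$ satisfies an $(\mathrm{NLS})$ on $\real^{d-1}$ (after a harmless phase shift absorbing the $\omega^2 f$ term), for which local well-posedness in $H^k(\real^{d-1})$ is standard since the $W^{1,\infty}$ embedding makes the nonlinearity locally Lipschitz in $H^k$; see \cite{cazenave}. The equation for $v$ is a perturbed (HNLS) whose nonlinearity satisfies the analogues of \eqref{estimativaexistencia1}--\eqref{estimativaexistencia2} uniformly in the $L^\infty$-bound of $\phi$, so Kato's method delivers local existence, uniqueness, continuous dependence and the blow-up alternative in $H^1(\real^d)$. Combining both pieces yields the theorem in $F$, with the blow-up alternative for $\|u\|_F$ following because $\|\phi\|_{Y_\omega} = \|f\|_{H^k(\real^{d-1})}$ stays controlled by the $(\mathrm{NLS})$ theory. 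The main obstacle, as in the plane wave case, is precisely the intersection statement $H^{-1}(\real^d) \cap Y_\omega' = \{0\}$ together with checking that the Kato iteration closes uniformly under the non-decaying but bounded perturbation $\phi$.
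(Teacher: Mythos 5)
Your proposal is correct and follows essentially the same route as the paper's sketch: decouple $u=v+\phi$ via the key fact $H^{-1}(\real^d)\cap Y_\omega'=\{0\}$, solve the profile equation by standard (NLS) local theory in $H^k(\real^{d-1})\hookrightarrow W^{1,\infty}(\real^{d-1})$, and close the $H^1$ component with Kato's method using the analogues of \eqref{estimativaexistencia1}--\eqref{estimativaexistencia2}. The only divergence is your proof of the intersection lemma via the Fourier support of $e^{i\omega x}g(\by)$ on the hyperplane $\{\xi_1=\omega\}$, rather than the translation-invariance argument of Appendix A (which adapts to $Y_\omega'$ since $x$-translation acts on it only by a unimodular phase); both are valid.
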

\begin{sproof}
The proof is almost identical to that of Theorem \ref{existenciaplana}. First of all, since $H^{-1}(\real^d)\cap Y'_\omega=\emptyset$, a solution $u=v+\phi\in F$ of (HNLS) with initial data $u_0=v_0+\phi_0\in F$, $\phi_0(\by)=e^{i\omega x}f_0(\by)$, is equivalent to a solution of the system
\begin{align}\label{desacoplado3}
iv_t &+ \qedsymbol v + \lambda|v+\phi|^\sigma(v+\phi) - \lambda|\phi|^\sigma\phi=0,\quad v(0)=v_0\\\label{desacoplado4}
if_t &+ \Delta_{\by}f - \lambda|f|^\sigma f=0, \quad f(0)=f_0, \quad \phi(t,x,\by)=e^{i\omega x-i\omega^2 t}f(-t,\by).
\end{align}
One then proceeds to solve \eqref{desacoplado4} \textit{backwards} in time using the usual $H^2$ local well-posedness results for (NLS). Finally, the fact that $f\in W^{1,\infty}(\real^2)$ and the estimates \eqref{estimativaexistencia1} and \eqref{estimativaexistencia2} allow the use of Kato's method to build the unique solution $v$ of \eqref{desacoplado3}.
\end{sproof}
Moreover, one may also derive $H^1$-stability for spatial standing waves, in a completely analogous fashion:
\begin{teo}
Fix $\omega\in\real$, $\lambda>0$, and set $\sigma= 4$, for $d=2$, $\sigma=2$ for $d=3$. Given $\phi_0\in Y_\omega$, suppose that its profile $f_0$ satisfies $f_0\in L^2(|\by|^2d\by)\cap H^2(\real^{d-1}) \cap W^{1,1}(\real^{d-1})$. Let $\phi$ be the spatial standing wave with initial data $\phi_0$. If either
\begin{enumerate}
\item $d=2$;
\item $d=3$, $f_0\in H^3(\real^2)$ and $\|f_0\|_{H^3}$ small;
\end{enumerate}
then $\phi$ is $H^1$-stable, i.e.,
$$
\forall \delta>0\ \exists\epsilon>0\ \|v_0\|_{H^1}<\epsilon \Rightarrow \| u-\phi\|_{L^\infty((0,\infty),H^1(\real^d))}<\delta,
$$
where $u$ is the (global) solution of (HNLS) in $F$ with initial data $v_0+\phi_0$.
\end{teo}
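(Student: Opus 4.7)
The plan is to transcribe, almost verbatim, the proofs of Theorems \ref{teoestabilidade} and \ref{teoestabilidade2} to the standing wave setting. The substitution $\phi(t,x,\by)=e^{i\omega x - i\omega^2 t}f(-t,\by)$ gives $|\phi|=|f|$ and $|\nabla\phi|^2=\omega^2|f|^2+|\nabla_\by f|^2$, so every bound on $\phi$ in $\real^d$ reduces to a bound on the profile $f$ on $\real^{d-1}$; and, since $\lambda>0$, the profile equation
$$
if_t+\Delta_\by f-\lambda|f|^\sigma f=0
$$
is the \emph{defocusing} NLS on $\real^{d-1}$ --- precisely the situation of the plane wave proofs.

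First I would establish the decay and regularity of $\phi$. For $d=2$, the profile equation is the defocusing quintic NLS on $\real$; the hypotheses on $f_0$ together with the global well-posedness and the pseudo-conformal decay result of Cazenave, combined with the Duhamel-type bound on $\|\nabla f\|_\infty$ as in Step 1 of Theorem \ref{teoestabilidade}, yield $\|\phi(t)\|_\infty\lesssim\min\{t^{-1/2},1\}$ and $\|\nabla\phi(t)\|_\infty\lesssim 1$. For $d=3$, the profile equation is the defocusing $L^2$-critical NLS on $\real^2$; the $H^3$-smallness assumption on $f_0$ provides, exactly as in the setup of Theorem \ref{teoestabilidade2}, a global $H^3\hookrightarrow W^{1,\infty}$ solution with $\|f(t)\|_\infty\lesssim t^{-1}$ and $\|\nabla f(t)\|_\infty\lesssim t^{-1/2}\|\nabla f_0\|_{L^1}$ for large $t$, which transfer to $\phi$ through the identities above.

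Second I would run the Strichartz bootstrap of Steps 2--6 of Theorem \ref{teoestabilidade} on the decoupled equation \eqref{desacoplado3} for $v=u-\phi$. One picks $T$ so that $\|\phi(t)\|_\infty\le\eta$ for $t\ge T$, extends $v$ past $T$ by Theorem \ref{existenciastanding}, and defines the Strichartz functional $h(t)$. Writing Duhamel's formula and decomposing the nonlinearity $\lambda(|v+\phi|^\sigma(v+\phi)-|\phi|^\sigma\phi)=\sum_{i=1}^{\sigma+1}g_i(v,\phi)$ by powers of $v$, the higher-order terms contribute $h(t)^i$; the quadratic piece contributes $h(t)^{3/2}+h(t)^3$; and the linear-in-$v$ piece contributes $h(t)\eta^{1/2}$, provided $\|\phi\|_\infty^{\sigma-1}\|\nabla\phi\|_\infty$ is integrable in $t$. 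Choosing $\eta$ small, and then $\|v_0\|_{H^1}$ small, closes the bootstrap $h(t)\lesssim\|v_0\|_{H^1}+\sum_p h(t)^p$ with $p>1$, and the blow-up alternative yields $T(u_0)=\infty$ and $\|u-\phi\|_{L^\infty((0,\infty),H^1)}<\delta$.

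The one genuinely delicate point is the $d=3$ case: here $\|\phi\|_\infty^{\sigma-1}\|\nabla\phi\|_\infty=\|\phi\|_\infty\|\nabla\phi\|_\infty$ decays only at rate $t^{-3/2}$, and this requires not only the $t^{-1}$ decay of $\|\phi\|_\infty$ but also the $t^{-1/2}$ decay of $\|\nabla\phi\|_\infty$; this is exactly why the hypotheses $f_0\in H^3(\real^2)$ with small $H^3$-norm and $f_0\in W^{1,1}(\real^2)$ are imposed. Apart from this verification, the proof is a routine repetition of the plane wave arguments.
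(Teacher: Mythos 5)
Your proposal is correct and follows essentially the same route as the paper's (sketched) proof: reduce to the profile solving the defocusing (NLS) on $\real^{d-1}$, transfer its $W^{1,\infty}$ decay to $\phi$, and rerun Steps 2--6 of the proof of Theorem \ref{teoestabilidade}. The decay rates you state ($t^{-1/2}$ for $d=2$, $t^{-1}$ for $d=3$, i.e. $t^{-(d-1)/2}$) are the correct ones for a profile living on $\real^{d-1}$, and your observation that integrability of $\|\phi\|_\infty^{\sigma-1}\|\nabla\phi\|_\infty$ is the delicate point in $d=3$ is exactly what the hypotheses on $f_0$ are designed to ensure.
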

\begin{sproof}
Setting $\phi(t,x,\by)=e^{i\omega x-i\omega^2 t }f(-t,\by)$, one sees that 
$$
\|\phi(t)\|_{W^{1,\infty}(\real^d)}\lesssim \|f(-t)\|_{W^{1,\infty}(\real^{d-1})}
$$
and that $f$ satisfies a defocusing (NLS) in dimension $d-1$. As in the spatial plane wave case, this implies that $\phi$ is global and that
$$
\|\phi(t)\|_{L^\infty}\lesssim \frac{1}{t^{d/2}},\quad \|\nabla \phi\|_{L^\infty}\lesssim \frac{1}{t^{(d-1)/2}}.
$$
The proof then follows from Steps 2-6 in the proof of Theorem \ref{teoestabilidade}, with precisely the same estimates.
\end{sproof}

\begin{nota}
Spatial standing waves are solutions which lie on $H^1(\mathbb{T}\times \real^{d-1})$, and so one could simply try to extend the (NLS) results over this space (see, for example, \cite{takaoka}). We find our approach more interesting for its novelty and because it allows to understand the effect of $H^1$ perturbations on these special solutions.
\end{nota}

\section{Further comments}

Let us summarize some interesting questions that rise from this work:
\begin{enumerate}
\item The $H^1$ framework may not be suited for some physical models. The question then is: what is a suited framework? The spaces $E$ and $F$, built from some classes of solutions, indicate that a local well-posedness theory may be presented in such a way that it includes functions without decay at infinity. Even in a mathematical perspective, the "bound-state" solutions built in \cite{nahmod} and \cite{nanlu} do not lie in $H^1$. It would be interesting to find local-wellposednes on more general spaces, which do not demand decay at infinity.

\item A blow-up solution in the $H^1$ framework is yet to be found. Our solutions never possess sufficient decay to assure integrability. An example of blow-up would be of extreme importance.

\item The construction of spaces $E$ and $F$ has a great capacity of generalization: let $\mathcal{Z}$ be the class of functions which have a particular shape. For a given equation, which is locally well-posed in $\mathcal{X}$, suppose that one has a class of solutions in $\mathcal{Z}$. This will imply that the profile of these solutions verifies a reduced equation, for which one may have local existence over some space $\mathcal{Y}$. If $\mathcal{Z}\cap\mathcal{X}=\emptyset$, then one should be able to prove local well-posedness on
$$
\mathcal{E}=\mathcal{X}\oplus \{u\in\mathcal{Z}: \mbox{ the profile of } u \mbox{ is in } \mathcal{Y}\}.
$$
Furthermore, this allows one to obtain a suitable functional framework to study the effect of $\mathcal{X}$-perturbations on solutions in $\mathcal{Z}$.
\end{enumerate}

\section{Acknowledgements}
Mário Figueira was partially supported by Fundação para a Ciência e Tecnologia, through the grant UID/MAT/04561/2013. Simão Correia was also supported by Fundação para a Ciência e Tecnologia, through the grants SFRH/BD/96399/2013 
and UID/MAT/04561/2013. The authors are indebted to Rémi Carles for having called our attetion to this problem.
\appendix
\section{Appendix}

Recall the definition of $X'_c$:
\begin{equation}
X'_c=\left\{u\in L^1_{loc}(\real^d): \exists f\in L^2(\real): u(x,\by)=f(x-c\cdot\by)\ a.e. \right\}.
\end{equation}
We note that this is a correct definition: if $f,\tilde{f}:\real\to\complex$ differ in a zero-measure set, then
$$
g(x,\by)=f(x-c\cdot \by),\quad \tilde{g}(x,\by)=\tilde{f}(x-c\cdot \by)
$$
differ also in a zero-measure set (for the Lebesgue measure in $\real^d$).
For a fixed $h\in\real^{d-1}$, define the translation operator $T_h:L^1_{loc}(\real^d)\to L^1_{loc}(\real^d)$,
\begin{equation}
(T^c_hu)(x,\by):=u(x+c\cdot h,\by+h) \ a.e.\ (x,\by)\in \real^d.
\end{equation}
If $\phi\in X_c'$, then
$$
\phi(x,\by)=f(x-c\cdot \by)=f((x+c\cdot h) - c\cdot(\by + h))= (T_h\phi)(x,\by), \ a.e.\ (x,\by)\in \real^d.
$$
and therefore $T^c_h\phi=\phi$, $\forall \phi\in X_c'$.
\begin{lema}\label{lemaapendice}
One has $H^{-1}(\real^d)\cap X_c'=\{0\}$.
\end{lema}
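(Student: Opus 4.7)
The plan is to combine the translation invariance of elements of $X'_c$ with a Fourier-side analysis of $H^{-1}(\real^d)$. Suppose $u\in H^{-1}(\real^d)\cap X'_c$, so that $u$ is represented by an $L^1_{\mathrm{loc}}$ function of the form $f(x-c\cdot\by)$ with $f\in L^2(\real)$. As already observed in the paragraph preceding the lemma, this forces
\be
T^c_h u = u \ \text{ in } L^1_{\mathrm{loc}}(\real^d),\ \forall h\in\real^{d-1}.
\ee
Since $T^c_h$ is translation by the vector $(c\cdot h, h)\in\real^d$, it acts isometrically on $H^{-1}(\real^d)$, so the identity also holds in $H^{-1}(\real^d)$ and hence in $\mathcal{S}'(\real^d)$.

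Next I would apply the Fourier transform on $\real^d$, writing $\xi=(\xi_1,\eta)\in\real\times\real^{d-1}$. The invariance becomes
\be
e^{i h\cdot(c\xi_1+\eta)}\,\hat u(\xi) = \hat u(\xi), \qquad \forall h\in\real^{d-1},
\ee
as an identity of tempered distributions. The assumption $u\in H^{-1}(\real^d)$ means that $g(\xi):=(1+|\xi|^2)^{-1/2}\hat u(\xi)$ is a genuine element of $L^2(\real^d)$. Multiplying the previous identity by the smooth nonvanishing weight $(1+|\xi|^2)^{-1/2}$ upgrades it, for each $h$, to the $L^2$ identity
\be
\bigl(e^{ih\cdot(c\xi_1+\eta)}-1\bigr)\,g(\xi) = 0 \ \text{ a.e.}\ \xi\in\real^d.
\ee

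From here the goal is to conclude that $g\equiv 0$. The function $F(h,\xi):=(e^{ih\cdot(c\xi_1+\eta)}-1)g(\xi)$ is jointly measurable, and the previous display says $F(h,\cdot)=0$ a.e.\ for every $h$. By Fubini-Tonelli, $F=0$ a.e.\ on $\real^{d-1}\times\real^d$, so for a.e.\ $\xi$ we have $e^{ih\cdot(c\xi_1+\eta)}=1$ for a.e.\ $h$. On the set $\{g\neq 0\}$ this forces $c\xi_1+\eta=0$, since any nonzero linear functional $h\mapsto h\cdot v$ takes values outside $2\pi\mathbb{Z}$ on a set of positive measure. Hence the support of $g$ lies in the one-dimensional subspace $L=\{(\xi_1,-c\xi_1):\xi_1\in\real\}\subset\real^d$, which has zero Lebesgue measure. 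Being an $L^2$ function, $g$ must vanish identically, whence $\hat u=0$ and $u=0$.

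The one step I expect to require care is the Fubini-based quantifier swap from \emph{for each $h$, an a.e.\ statement in $\xi$} to an a.e.\ statement on a full-measure set of pairs $(h,\xi)$; this is precisely what converts the distributional invariance into a pointwise support restriction on $g$. Everything else reduces either to standard properties of the Fourier transform on $H^{-1}$ or to the observation, already available in the paper, that $X'_c$ is invariant under every translation $T^c_h$.
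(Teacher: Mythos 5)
Your proof is correct, but it takes a genuinely different route from the paper's. Both arguments hinge on the same starting point, namely that every $u\in X'_c$ satisfies $T^c_hu=u$ for all $h\in\real^{d-1}$. The paper then stays entirely on the physical side: it writes $w=v_0+\sum_i(v_i)_{x_i}$ with $v_i\in L^2(\real^d)$, picks $h$ with $c\cdot h=1$ so that the translates $T^c_{2mh}$ carry the slab $\Omega=\{|x|<1\}$ onto a tiling of $\real^d$, and concludes that $\|w\|_{H^{-1}(\real^d)}^2$ equals an infinite sum of identical copies of $\|w\|_{H^{-1}(\Omega)}^2$, which forces both to vanish. You instead pass to the Fourier side, turning the invariance into the modulation identity $\bigl(e^{ih\cdot(c\xi_1+\eta)}-1\bigr)\hat u=0$, and use the weighted-$L^2$ characterization of $H^{-1}$ together with a Fubini quantifier swap to confine the support of $\hat u$ to the line $\{\eta=-c\xi_1\}$, which is Lebesgue-null since $d\ge 2$. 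The step you flag is indeed the only delicate one and you handle it correctly: joint measurability of $(h,\xi)\mapsto\bigl(e^{ih\cdot(c\xi_1+\eta)}-1\bigr)g(\xi)$, Tonelli applied to the indicator of its nonvanishing set, and the observation that a nonzero linear functional $h\mapsto h\cdot v$ lands in $2\pi\mathbb{Z}$ only for a null set of $h$. Your version is somewhat more robust: it works verbatim for $H^{-s}$ with any $s$, and it adapts directly to the second appendix lemma on $(H^{-1}(\real^d)\oplus X_{c_1})\cap X_{c_2}$, since the frequency-support constraints coming from two distinct speeds intersect only at the origin. The paper's tiling argument is more elementary, avoiding Fourier analysis altogether, but is tied to the specific $L^2$-decomposition of the $H^{-1}$ norm.
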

\begin{proof}
Take $w\in H^{-1}(\real^d)\cap X_c'$. Then there exist $v_i\in L^2(\real^d)$, $i=0,...,d$, such that
$$
w=v_0 + \sum_{i=1}^{d} (v_i)_{x_i}
$$
and, for any $\Omega\subset \real^d$ open,
$$
\|w\|_{H^{-1}(\Omega)}=\left(\sum_{i=0}^{d}\int_\Omega |v_i|^2\right)^{1/2}.
$$
Take $\Omega=\{(x,\by)\in\real^d: |x|<1\}$. Fix $h\in \real^{d-1}$ such that $c\cdot h=1$. Using the fact that $T^c_hw=w$, one has
\begin{equation}
\|w\|_{H^{-1}(\real^d)}^2=\sum_{i=0}^{d}\int_{\real^d} |v_i|^2=\sum_{i=0}^{d}\sum_{m\in\mathbb{Z}} \int_{\Omega} |T^c_{2mh}v_i|^2=\sum_{m\in\mathbb{Z}} \|T^c_{2mh}w\|_{H^{-1}(\Omega)}^2=\sum_{m\in\mathbb{Z}} \|w\|_{H^{-1}(\Omega)}^2.
\end{equation}
Therefore one must have $\|w\|_{H^{-1}(\Omega)}=0$ and so $\|w\|_{H^{-1}(\real^d)}=0$.
\end{proof}
\begin{lema}
For any $c_1,c_2\in \real^{d-1}\setminus\{0\}$, $(H^{-1}(\real^d)\oplus X_{c_1})\cap X_{c_2}=\{0\}$.
\end{lema}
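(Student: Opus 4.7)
The plan is to reduce the statement to the previous lemma via a translation argument, under the implicit assumption $c_1\neq c_2$ (the claim is false when $c_1=c_2$, since then any nonzero element of $X_{c_1}$ lies in the intersection).

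Take $w\in (H^{-1}(\real^d)\oplus X_{c_1})\cap X_{c_2}$ and decompose it in two ways: $w=\eta+\phi_1=\phi_2$, where $\eta\in H^{-1}(\real^d)$ and $\phi_i(x,\by)=f_i(x-c_i\cdot\by)$ with $f_i\in H^2(\real)$. Rearranging, $\phi_2-\phi_1=\eta\in H^{-1}(\real^d)$. The key observation is that $X_{c_1}$ is translation-invariant under the family $T^{c_1}_h$ introduced in the appendix: exactly as shown there for $X'_{c_1}$, one has $T^{c_1}_h\phi_1=\phi_1$ for every $h\in\real^{d-1}$.

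Now apply $T^{c_1}_h$ to both sides of $\phi_2-\phi_1=\eta$. Since the usual Euclidean translations are isometries of $H^{-1}(\real^d)$, we have $T^{c_1}_h\eta\in H^{-1}(\real^d)$, hence $T^{c_1}_h\phi_2-\phi_1\in H^{-1}(\real^d)$. Subtracting the original relation yields
\begin{equation}
T^{c_1}_h\phi_2-\phi_2\in H^{-1}(\real^d).
\end{equation}
On the other hand, a direct computation gives
\begin{equation}
(T^{c_1}_h\phi_2)(x,\by)=f_2\bigl(x-c_2\cdot\by+(c_1-c_2)\cdot h\bigr),
\end{equation}
so $T^{c_1}_h\phi_2-\phi_2$ has profile $f_2(\cdot+(c_1-c_2)\cdot h)-f_2\in L^2(\real)$, and therefore lies in $X'_{c_2}$. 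By Lemma \ref{lemaapendice}, this difference is zero, i.e.\ $T^{c_1}_h\phi_2=\phi_2$ a.e.\ for every $h\in\real^{d-1}$.

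Equivalently, $f_2(z+(c_1-c_2)\cdot h)=f_2(z)$ for a.e.\ $z\in\real$ and every $h\in\real^{d-1}$. Since $c_1-c_2\neq 0$, the map $h\mapsto(c_1-c_2)\cdot h$ is surjective onto $\real$, so $f_2$ is invariant under all real translations, hence a constant; being in $H^2(\real)\subset L^2(\real)$, it must vanish. Thus $w=\phi_2=0$. The main technical step is verifying that $T^{c_1}_h\phi_2-\phi_2$ genuinely lives in $X'_{c_2}$ so that the earlier lemma applies; everything else is a direct consequence of Lemma \ref{lemaapendice} and the translation invariance built into $X_{c_1}$.
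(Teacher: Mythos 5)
Your proof is correct and follows essentially the same route as the paper's: both arguments hinge on the translation operators $T^c_h$, the invariance of $X_c$ under them, and Lemma \ref{lemaapendice}, the only difference being that you translate along the $c_1$-directions (fixing $\phi_1$) so that $T^{c_1}_h\phi_2-\phi_2\in H^{-1}(\real^d)\cap X'_{c_2}$ and $\phi_2$ dies directly, whereas the paper translates along the $c_2$-directions (fixing $z$) and eliminates $w$ and $\phi_1$ separately. Your remark that the statement implicitly requires $c_1\neq c_2$ is a good catch and worth making explicit: the lemma as written does not exclude $c_1=c_2$ (for which it is false), and the paper's own proof uses $c_1\neq c_2$ tacitly at the same point you do, namely when concluding that a profile invariant under all translations of $\real$ must vanish in $L^2$.
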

\begin{proof}
Take $z\in (H^{-1}(\real^d)\oplus X_{c_1})\cap X_{c_2}$. We write $z=w+\phi_1$, with $w\in H^{-1}(\real^d)$ and $ \phi_1\in X_{c_1}$. Fix $h\in\real^{d-1}$. Then
$$
T_h^{c_2}z=z,\ i.e.,\ w-T_h^{c_2}w=-\phi_1 + T_h^{c_2}\phi_1.
$$
The r.h.s. is in $H^{-1}(\real^d)$, while the l.h.s. is in $X_{c_1}$. Therefore both sides are equal to 0:
$$
w=T_h^{c_2}w,\ \phi_1=T_h^{c_2}\phi_1.
$$
One now concludes that $w,\phi_1=0$ as in the previous proof. 
\end{proof}

\small
\noindent \textsc{Sim\~ao Correia}\\
CMAF-CIO and FCUL \\
\noindent Campo Grande, Edif\'icio C6, Piso 2, 1749-016 Lisboa (Portugal)\\
\verb"sfcorreia@fc.ul.pt"\\

\small
\noindent \textsc{Mário Figueira}\\
CMAF-CIO and FCUL \\
\noindent Campo Grande, Edif\'icio C6, Piso 2, 1749-016 Lisboa (Portugal)\\
\verb"msfigueira@fc.ul.pt"\\

\end{document}